\documentclass[11pt]{article}
\usepackage{amsmath,amssymb,amsthm}
\usepackage[margin=1in]{geometry}
\usepackage{mathtools}
\usepackage{xcolor}
\usepackage{url}
\numberwithin{equation}{section}

\newtheorem{theorem}{Theorem}[section]
\newtheorem{proposition}[theorem]{Proposition}
\newtheorem{lemma}[theorem]{Lemma}
\newtheorem{corollary}[theorem]{Corollary}
\newtheorem{assumption}[theorem]{Assumption}
\theoremstyle{remark}
\newtheorem{remark}[theorem]{Remark}

\newcommand{\Bn}{\mathbb B_n}
\newcommand{\D}{\mathbb D}
\newcommand{\C}{\mathbb C}
\newcommand{\R}{\mathbb R}
\newcommand{\dd}{\,\mathrm d}
\newcommand{\norm}[1]{\left\|#1\right\|}

\title{The Reciprocal Problem on Weighted Bergman Spaces}
\author{%
Guangfu Cao, \hskip5mm Li He$^1$, \hskip5mm Shuqing Zhang$^1$\\
School of Mathematics and Information Science,\\
Guangzhou University,\\
Guangzhou 510006, China\\
\\[1ex]
}
\date{}

\begin{document}
\maketitle
\footnotetext[1]{Corresponding authors.}

\begin{abstract}
We study the reciprocal problem for weighted Bergman spaces: if $f\in A_\alpha^p(\Bn)$ and $\inf_{\Bn}|f|>0$, does it follow that $1/f\in A_\alpha^p(\Bn)$? We determine the range of parameters for which the answer is affirmative. In particular, we prove that functions in $A_{\alpha}^p(\D)$ have the reciprocal property for all $\alpha \in \R$ and $p\geq 1,$ where $\D$ denotes the unit disk in $\C.$ Moreover, functions in $A_{\alpha}^p(\Bn)$ have the reciprocal property for   all $\alpha \in \R$ when $n=2$ and $p=2.$ In addition, we resolve the reciprocal problem in the three-dimensional Drury--Arveson space $H_3^2$ and obtain an equivalent condition in the four-dimensional space $H_4^2$. We also settle the reciprocal problem for general $A_{\alpha}^p(\Bn)$ under some  additional conditions. 
\end{abstract}
\noindent\textit{2010 Mathematics Subject Classification:}42B15, 47A35.

\noindent\textit{Keywords:} reciprocal problem; weighted Bergman space; Besov space.
\section{Introduction}

Let
\[
\Bn=\{z=(z_1,\ldots,z_n)\in\C^n:|z|<1\}
\]
denote the unit ball, $\D=\Bn$ for $n=1$, and let $R$ denote the radial derivative:
\[
Rf(z)=\sum_{j=1}^n z_j\frac{\partial f}{\partial z_j}(z).
\]
If $f=\sum_{k\geq0}f_k$ is its homogeneous expansion, then $Rf=\sum_{k\geq0}k f_k$. In general, for $\beta\in\mathbb R$ one can define the fractional radial operator
\[
R^\beta f=\sum_{k\geq1}k^\beta f_k.
\]
For example, the Hardy--Sobolev space $H^2_\beta$ consists of all holomorphic functions satisfying $R^\beta f\in H^2$, with norm $\|f\|_{H^2_\beta}^2=|f(0)|^2+\|R^\beta f\|_{H^2}^2$. In multiplier theory, when the corresponding multiplication operator is bounded, one has the spectral characterization
\[
\sigma(M_\varphi;H^2_\beta)=\overline{\varphi(\Bn)},
\]
and the spectral theory of multipliers on Hardy--Sobolev spaces can be found in \cite{CaoHeZhu2018}.
For $0<p<\infty$ and $\alpha\in\R$, choose an integer $m\geq0$ such that
\[
pm+\alpha>-1.
\]
Set
\[
\gamma=\alpha+pm,\qquad
d\mu_\gamma(z)=(1-|z|^2)^\gamma\dd v(z).
\]
Let $\dd v_\alpha(z)=(1-|z|^2)^\alpha\dd v(z)$ denote the original weighted measure (which need not be finite when $\alpha\leq-1$). Apart from the constant term, the weighted Bergman space $A_\alpha^p(\Bn)$ can be characterized by
\begin{equation}
f\in A_\alpha^p(\Bn)
\quad\Longleftrightarrow\quad
R^mf\in L^p(\Bn,d\mu_\gamma).
\label{eq:derivative-characterization}
\end{equation}
This derivative characterization can be found in \cite{ZhaoZhu2008,Zhu2005}, showing that Hardy--Sobolev spaces and weighted Bergman spaces are essentially equivalent. Consequently, there is the corresponding result for weighted Bergman spaces:
\[
\sigma(M_\varphi;A^2_\alpha)=\overline{\varphi(\Bn)}.
\]
\cite{Zhu2016} posed the following reciprocal problem: if
\begin{equation}
 f\in A_\alpha^p(\Bn),\qquad
 \inf_{z\in\Bn}|f(z)|>0,
\end{equation}
does one have
 \[
 \frac1f\in A_\alpha^p(\Bn)?
 \]
 Zhu pointed out  that  even for the unit disk,  this problem is still open for general $p$ and $\alpha$!
 
Equation~\eqref{eq:derivative-characterization} reformulates the problem as an integrability criterion for higher-order derivatives. The case $\alpha>-1$ is immediate, since the weighted measure is then finite; the difficulty arises when $\alpha\leq-1$. A complete treatment for all $\alpha\in\R$ and $0<p<\infty$ remains open and is tied to endpoint embedding problems for analytic Besov spaces.

\section{Higher-Order Derivative Formula}
\begin{lemma}\label{lem:bell}
For any integer $m\geq1$ and any zero-free holomorphic function $f$, there exist signed constants $C_{\ell_1,\ldots,\ell_r}$ such that
\begin{equation}
 R^m\left(\frac1f\right)
 =
 \sum_{r=1}^{m}\frac{1}{f^{r+1}}
 \sum_{\ell_1+\cdots+\ell_r=m}
 C_{\ell_1,\ldots,\ell_r}
 R^{\ell_1}f\cdots R^{\ell_r}f .
\label{eq:bell-expansion}
\end{equation}
Here the inner sum runs over all ordered partitions $\ell_1+\cdots+\ell_r$ of $m$, with each $\ell_j\geq1$.
\end{lemma}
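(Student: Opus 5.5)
We argue by induction on $m$, using only that $R=\sum_j z_j\partial_j$ is a first-order derivation. It satisfies the Leibniz rule $R(gh)=(Rg)h+g(Rh)$ and the chain rule $R(f^{-k})=-k f^{-k-1}Rf$; the latter holds because $f$ is zero-free, so every power $f^{-k}$ is holomorphic on $\Bn$. Here $R^\ell$ means the $\ell$-fold iterate of $R$, which for integer $\ell$ agrees with the operator $\sum_k k^\ell f_k$ from the introduction.

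For the base case $m=1$, the chain rule gives $R(1/f)=-f^{-2}Rf$. This is \eqref{eq:bell-expansion} with $r=1$, $\ell_1=1$ and $C_1=-1$.

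For the inductive step, assume \eqref{eq:bell-expansion} holds for $m$ and apply $R$ to each term $f^{-(r+1)}R^{\ell_1}f\cdots R^{\ell_r}f$. By the Leibniz rule this produces two kinds of terms.

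\begin{enumerate}
\item Differentiating the prefactor gives $-(r+1)f^{-(r+2)}\,Rf\,R^{\ell_1}f\cdots R^{\ell_r}f$. This is a term with $r+1$ factors whose orders $(1,\ell_1,\dots,\ell_r)$ sum to $m+1$, and the power of $1/f$ is exactly $(r+1)+1$.
\item Differentiating the factor $R^{\ell_j}f$ gives $f^{-(r+1)}R^{\ell_1}f\cdots R^{\ell_j+1}f\cdots R^{\ell_r}f$. This is a term with $r$ factors whose orders $(\ell_1,\dots,\ell_j+1,\dots,\ell_r)$ sum to $m+1$.
\end{enumerate}

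In both cases the number of factors $r'$ satisfies $1\le r'\le m+1$, every order is at least $1$, and the exponent of $1/f$ equals $r'+1$. Collecting like terms, and placing each resulting product under an ordered partition of $m+1$, gives integer (hence signed) constants $C_{\ell_1,\dots,\ell_{r'}}$ for which \eqref{eq:bell-expansion} holds at level $m+1$. Ordered partitions that never arise simply receive coefficient $0$. Since the lemma only asserts existence of the constants, this completes the induction.

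No step here is a real obstacle. The one point needing care is the bookkeeping in the inductive step: checking that the exponent of $1/f$ always equals (number of factors)$+1$ and that the orders always sum to $m+1$; both are immediate from the two cases above. If explicit constants were wanted, one could instead invoke Faà di Bruno's formula for $g\circ f$ with $g(w)=1/w$, applied to the one-variable function $t\mapsto f(tz)$ at $t=1$. This works because $R^\ell f(z)=\left(t\frac{d}{dt}\right)^\ell f(tz)\big|_{t=1}$, and it reduces the lemma to the classical Bell-polynomial expansion. The induction above avoids this and suffices.
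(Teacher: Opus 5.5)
Your proof is correct and is essentially the paper's argument: induction on $m$ with base case $R(1/f)=-f^{-2}Rf$, and an inductive step in which $R$ acting on $f^{-(r+1)}$ adds a new factor $Rf$ while $R$ acting on $R^{\ell_j}f$ raises one order, after which like monomials are collected. Your extra bookkeeping (the exponent of $1/f$ is always the number of factors plus one, and unused partitions get coefficient $0$) only makes explicit what the paper leaves implicit.
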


\begin{proof}
When $m=1$, the identity is $R(f^{-1})=-f^{-2}Rf$. Assume that the formula holds for $m$, and apply $R$ to each summand. When $R$ acts on $f^{-(r+1)}$, it produces $-(r+1)f^{-(r+2)}Rf$, yielding a term corresponding to an ordered partition of $m+1$ into $r+1$ positive integers. When $R$ acts on $R^{\ell_j}f$, it replaces that factor by $R^{\ell_j+1}f$, yielding a partition of $m+1$ into $r$ parts. Combining identical monomials gives \eqref{eq:bell-expansion} at order $m+1$, with signed integer coefficients. The proof now follows by mathematical induction.
\end{proof}

When $m=2$, the formula becomes
\begin{equation}
R^2(1/f)=-\frac{R^2f}{f^2}+\frac{2(Rf)^2}{f^3}.
\label{eq:second-reciprocal-derivative}
\end{equation}
The first term can be controlled by a positive lower bound for $f$, but the second term requires control of $(Rf)^2$. More generally, knowing only
\[
R^mf\in L^p(d\mu_\gamma)
\]
does not determine whether the $m$th derivative of its reciprocal belongs to $L^p(d\mu_\gamma)$. Thus, the central difficulty in the reciprocal problem is the lack of control of all the lower-order products in \eqref{eq:bell-expansion}, which shows that the problem cannot be solved directly from the derivative formula for the reciprocal.

A common approach to the reciprocal problem is to use the norm of $f$ to control the norm of $\frac{1}{f}$, but the following theorem shows that this route generally fails.
\begin{theorem}
\label{thm:affine-obstruction}
Let $m\geq1$ satisfy $pm+\alpha>-1$. If $\alpha<-n-p-1$, then there is no constant depending only on $c$ such that
\begin{equation}
\norm{R^m(1/f)}_{L^p(d\mu_\gamma)}
\leq C(c)\left(1+\norm{R^mf}_{L^p(d\mu_\gamma)}\right)
\label{eq:affine-obstruction-estimate}
\end{equation}
holds for every holomorphic function $f$ satisfying $f\in A_\alpha^p(\Bn)$ and $|f|\geq c$.
\end{theorem}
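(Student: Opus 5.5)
The plan is to argue by contradiction with an explicit one-parameter family of affine functions $f_\lambda$ satisfying $|f_\lambda|\ge c$. The family is chosen so that $\norm{R^m f_\lambda}_{L^p(d\mu_\gamma)}$ grows like $\lambda$, while $\norm{R^m(1/f_\lambda)}_{L^p(d\mu_\gamma)}$ grows strictly faster precisely when $\alpha<-n-p-1$.

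\textbf{The family and the easy side.} Fix $c>0$ and, for $\lambda>0$, set
\[
f_\lambda(z)=c+\lambda(1-z_1).
\]
Since $\operatorname{Re}(1-z_1)>0$ on $\Bn$, we have $|f_\lambda|\ge \operatorname{Re} f_\lambda\ge c$. Being a polynomial, $f_\lambda$ lies in $A^p_\alpha(\Bn)$ by \eqref{eq:derivative-characterization}, because $\gamma>-1$ makes $\mu_\gamma$ finite. Moreover $R^m f_\lambda=-\lambda z_1$ for every $m\ge1$. Hence the right-hand side of \eqref{eq:affine-obstruction-estimate} is comparable to $1+\lambda$, that is, it is $O(\lambda)$ as $\lambda\to\infty$.

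\textbf{Computing the reciprocal.} Write $1/f_\lambda=(c+\lambda)^{-1}(1-wz_1)^{-1}$ with $w=\lambda/(c+\lambda)\in(0,1)$, so that $1-w=c/(c+\lambda)\asymp \lambda^{-1}$. Expanding $(1-wz_1)^{-1}=\sum_k w^k z_1^k$ gives
\[
R^m(1/f_\lambda)=(c+\lambda)^{-1}\sum_{k\ge1}k^m w^k z_1^k .
\]
This is a finite combination of the terms $z_1^j(1-wz_1)^{-j-1}$, $1\le j\le m$, with leading term $m!\,w^m z_1^m(1-wz_1)^{-m-1}$. Near the boundary point $e_1$ the leading term dominates, so
\[
\norm{R^m(1/f_\lambda)}_{L^p(d\mu_\gamma)}^p\gtrsim \lambda^{-p}\int_{\Bn\cap\{|z-e_1|<\delta\}}\frac{(1-|z|^2)^\gamma}{|1-wz_1|^{(m+1)p}}\dd v(z).
\]

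\textbf{The main estimate.} The key step, which I expect to be the main technical obstacle, is a lower bound of Forelli--Rudin type for this integral. I would first replace $wz_1$ by $\langle z,we_1\rangle$ and use the standard asymptotics for
\[
\int_{\Bn}\frac{(1-|z|^2)^\gamma}{|1-\langle z,a\rangle|^{n+1+\gamma+s}}\dd v(z)\asymp(1-|a|^2)^{-s},\qquad s>0,
\]
from Rudin's Proposition 1.4.10. Here $s=(m+1)p-n-1-\gamma=p-n-1-\alpha$, which is positive because $\alpha<-n-p-1<p-n-1$. I must also check that the lower bound localizes near $e_1$: the mass of the integral concentrates in a Koranyi region of size $1-w$ around $e_1$, where the subleading terms are smaller by factors of $(1-w)$. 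This would give
\[
\norm{R^m(1/f_\lambda)}_{L^p(d\mu_\gamma)}^p\gtrsim \lambda^{-p}\,\lambda^{p-n-1-\alpha}=\lambda^{-n-1-\alpha}.
\]

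\textbf{Conclusion.} The left-hand side of \eqref{eq:affine-obstruction-estimate} is therefore $\gtrsim\lambda^{(-n-1-\alpha)/p}$, while the right-hand side is $O(\lambda)$. The hypothesis $\alpha<-n-p-1$ is exactly $(-n-1-\alpha)/p>1$, so the ratio of the two sides tends to $\infty$ as $\lambda\to\infty$. No constant $C(c)$ can work, since each $f_\lambda$ is admissible with the same $c$.
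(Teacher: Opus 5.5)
Your proposal is correct and follows essentially the paper's route: an affine family whose zero approaches the sphere at distance $\asymp 1/\lambda$, with $\|R^m f_\lambda\|_{L^p(d\mu_\gamma)}\asymp\lambda$ and $\|R^m(1/f_\lambda)\|_{L^p(d\mu_\gamma)}^p\gtrsim\lambda^{-(n+1+\alpha)}$, so the ratio blows up exactly when $\alpha<-n-p-1$. The one difference is in the lower bound: the paper uses pointwise bounds on an explicit box of volume $\asymp B^{-(n+1)}$ on which $1-|z|^2\asymp B^{-1}$, whereas you use the Forelli--Rudin asymptotics, and your localization step is easy because the region $\{|1-wz_1|\ge\epsilon\}$ contributes only $O(1)$ while the full integral tends to infinity.
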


\begin{proof}
Consider
\[
f_B(z)=1+B(1+z_1),\qquad B>1.
\]
Since the zero $-1-1/B$ lies outside the unit disk,
\[
|f_B(z)|=B\left|z_1+1+\frac1B\right|\geq1,\qquad z\in\Bn.
\]
Moreover,
\[
R^mf_B=Bz_1,\qquad m\geq1,
\]
so $f_B$ is bounded away from zero. Let $g_B=1/f_B$, and write $w=z_1+1$ and $z'=(z_2,\ldots,z_n)$. For sufficiently large $B$, consider
\[
 E_B=\left\{z\in\Bn:\frac1{2B}\leq\operatorname{Re}w\leq\frac1B,
 \ |\operatorname{Im}w|\leq\frac1{4B},\ |z'|\leq\frac1{8\sqrt B}\right\}.
\]
Then $E_B\subset\Bn$, $|E_B|\asymp B^{-(n+1)}$, and on $E_B$,
\[
1-|z|^2\asymp B^{-1},\qquad |1+Bw|\asymp1.
\]
For functions depending only on $z_1$,
\[
R^m=\sum_{j=1}^m S(m,j)z_1^j\frac{\partial^j}{\partial z_1^j},
\]
where $S(m,j)$ are Stirling numbers of the second kind. Since
\[
\frac{\partial^j g_B}{\partial z_1^j}
 =(-1)^j j!\frac{B^j}{(1+Bw)^{j+1}},
\]
the contribution of the $j=m$ term is of order $B^m$, while all the remaining terms are $O(B^{m-1})$ uniformly on $E_B$. Hence the $j=m$ term cannot be cancelled for sufficiently large $B$, and
\[
|R^m g_B(z)|\gtrsim B^m,\qquad z\in E_B.
\]
Therefore, setting $\gamma=\alpha+pm$, we obtain
\[
\begin{aligned}
\norm{R^m g_B}_{L^p(d\mu_\gamma)}^p
&\gtrsim B^{mp}B^{-\gamma}B^{-(n+1)}
 =B^{-(\alpha+n+1)},\\
\norm{R^m g_B}_{L^p(d\mu_\gamma)}
&\gtrsim B^{-\frac{\alpha+n+1}{p}},
\qquad
\norm{R^mf_B}_{L^p(d\mu_\gamma)}\asymp B,
\end{aligned}
\]
and consequently
\[
\frac{\norm{R^m(1/f_B)}_{L^p(d\mu_\gamma)}}
{1+\norm{R^mf_B}_{L^p(d\mu_\gamma)}}
\gtrsim B^{-\frac{\alpha+n+1+p}{p}}.
\]
If $\alpha<-n-p-1$, the right-hand side tends to infinity. Therefore, there is no constant depending only on the lower bound $c$ such that
\begin{equation}
\norm{R^m(1/f)}_{L^p(d\mu_\gamma)}
\leq C(c)\left(1+\norm{R^mf}_{L^p(d\mu_\gamma)}\right)
\end{equation}
holds for every function $f$ satisfying $|f|\geq c$. In fact, the family satisfies $\inf_{\Bn}|f_B|\geq1$, showing that there is no uniform constant $C(1)$ for the control inequality.
\end{proof}
\begin{remark}Theorem~\ref{thm:affine-obstruction} rules out only the uniform estimate \eqref{eq:affine-obstruction-estimate}; it does not give a counterexample to the reciprocal problem itself.
\end{remark}
The family $\{f_B\}$ shows that lower-order products are not controlled by the highest-order derivative alone: for the partition $m=1+\cdots+1$, $(Rf_B)^m=B^m z_1^m$ while $R^m f_B=Bz_1$, and their norms differ by an unbounded factor as $B\to\infty$.
\section{A Range with an Automatic Reciprocal Property}
This section identifies a range of parameters for which the reciprocal property holds automatically.
\subsection{$\alpha >-p-1$}
\begin{theorem}
\label{thm:subcritical-reciprocal}
Let $0<p<\infty$ and $\alpha>-p-1$. If $f\in A_\alpha^p(\Bn)$ and $\inf_{\Bn}|f|\geq c>0$, then $1/f\in A_\alpha^p(\Bn)$.
\end{theorem}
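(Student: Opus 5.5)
For $\alpha>-p-1$ we may take $m=1$ in \eqref{eq:derivative-characterization}, since $p\cdot 1+\alpha>-1$. Then $\gamma=\alpha+p>-1$, and membership $f\in A_\alpha^p(\Bn)$ is equivalent to
\[
\int_{\Bn}|Rf|^p(1-|z|^2)^{\alpha+p}\dd v<\infty .
\]
With $m=1$, Lemma~\ref{lem:bell} contains only a single term, so none of the lower-order products discussed after \eqref{eq:second-reciprocal-derivative} appear. This is why the reciprocal property should be automatic in this range.

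The steps are as follows. First, since $\inf|f|\geq c>0$, the function $f$ is zero-free and $1/f$ is holomorphic on $\Bn$. Second, the case $m=1$ of Lemma~\ref{lem:bell} gives $R(1/f)=-Rf/f^2$, and hence pointwise
\[
|R(1/f)(z)|\leq c^{-2}|Rf(z)|.
\]
Third, integrating against $d\mu_\gamma$ gives
\[
\norm{R(1/f)}_{L^p(d\mu_\gamma)}\leq c^{-2}\norm{Rf}_{L^p(d\mu_\gamma)}<\infty .
\]
Finally, applying \eqref{eq:derivative-characterization} in the reverse direction to $1/f$ yields $1/f\in A_\alpha^p(\Bn)$. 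The constant term plays no role, because the characterization holds apart from constants.

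The only point needing care is that \eqref{eq:derivative-characterization} is valid with the particular choice $m=1$, not just for some large $m$. The characterization in \cite{ZhaoZhu2008,Zhu2005} holds for every integer $m$ with $pm+\alpha>-1$, i.e.\ the space does not depend on the admissible $m$ chosen, and the paper's setup says to choose any such $m$. If $\alpha>-1$, one can instead use $m=0$ and argue directly: $|1/f|\leq 1/c$ and $d v_\alpha$ is a finite measure. In that case $m=1$ is also admissible, so the derivative argument above covers everything uniformly.

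I expect no substantial obstacle, because the reduction to first order removes the nonlinear terms entirely.
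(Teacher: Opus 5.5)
Your proposal is correct and is essentially the paper's proof. The paper also takes $m=1$ and uses the pointwise bound $|R(1/f)|\le c^{-2}|Rf|$. The only difference is that the paper applies this just for $-p-1<\alpha\le-1$ and handles $\alpha>-1$ separately by the finite-measure bound $|1/f|\le 1/c$. You instead run the $m=1$ argument uniformly and mention the $m=0$ shortcut only as an alternative.
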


\begin{proof}
There are two cases. If $\alpha>-1$, then
$dv_\alpha(\Bn)<\infty$ and
\[
\int_{\Bn}\left|\frac1f\right|^p\,dv_\alpha
\leq c^{-p}v_\alpha(\Bn)<\infty.
\]
Therefore, $1/f\in A_\alpha^p$. If $-p-1<\alpha\leq-1$, we may take $m=1$ in the derivative characterization, and
\[
R(1/f)=-\frac{Rf}{f^2},
\qquad
|R(1/f)|\leq c^{-2}|Rf|.
\]
Since $Rf\in L^p(d\mu_{\alpha+p})$, $R(1/f)$ belongs to the same space; \eqref{eq:derivative-characterization} then gives $1/f\in A_\alpha^p$.
\end{proof}

Theorem~\ref{thm:subcritical-reciprocal} covers the range
\[
\alpha>-p-1.
\]
Therefore, the difficult range is
\[
\alpha\leq-p-1,
\]
where necessarily $m\geq2$.
\subsection{$\alpha <-n-1$}
Another range with an automatic reciprocal property is $\alpha<-n-1$.

\begin{lemma}\label{lem:besov-identification}
Under the standard normalization, the derivative definition of $A_\alpha^p$ is equivalent to the definition of the analytic Besov space
$\mathcal B_s^p(\Bn)$, where
\begin{equation}
s=-\frac{\alpha+1}{p}.
\label{eq:besov-smoothness}
\end{equation}
\end{lemma}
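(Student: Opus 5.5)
This is essentially a matter of matching definitions and normalizations, so the plan is to unwind both definitions and compare the weights. I take the standard definition of the analytic Besov space $\mathcal B_s^p(\Bn)$ for smoothness $s>0$: choose an integer $k>s$ and require
\[
\int_{\Bn}|R^k f(z)|^p(1-|z|^2)^{p(k-s)-1}\dd v(z)<\infty,
\]
with norm $|f(0)|+\|R^kf\|_{L^p(d\mu_{p(k-s)-1})}$. The normalization intended in the statement is this one, with $s$ measuring the order of differentiation minus the weight exponent divided by $p$. I also use the standard fact, found in \cite{ZhaoZhu2008,Zhu2005}, that this definition does not depend on the integer $k>s$, and that one may replace $R^k$ by the invertible fractional radial operators $R^{\beta}$ (or $R^{\beta,t}$) up to equivalent norms.

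\textbf{Step 1: match the weight exponents.} In \eqref{eq:derivative-characterization}, $f\in A_\alpha^p$ exactly when $R^mf\in L^p(d\mu_\gamma)$ with $\gamma=\alpha+pm>-1$. Setting $\gamma=p(m-s)-1$ gives $\alpha+pm=pm-ps-1$, so $s=-(\alpha+1)/p$, which is \eqref{eq:besov-smoothness}. The admissibility condition $\gamma>-1$ becomes $m>s$, which is precisely the condition on $k$ in the Besov definition. The two finiteness conditions on $R^mf$ therefore coincide for every admissible $m$. The constant term is the only part $R^m$ does not see, and both norms add $|f(0)|$ back, so the norms are equivalent.

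\textbf{Step 2: independence of the integer $m$.} The characterization \eqref{eq:derivative-characterization} is stated for some admissible $m$, while the Besov space uses some $k>s$. I therefore need the norms for different admissible integers to be comparable. I would invoke Zhao--Zhu's theorem that for $\gamma>-1$ and any $t\ge0$, $R^m f\in L^p(d\mu_\gamma)$ if and only if $R^{m+t}f\in L^p(d\mu_{\gamma+pt})$. The forward direction follows from Cauchy-type estimates. The reverse direction follows from the integral representation of $f$ via the Bergman kernel of $A^1_{\gamma'}$ for large $\gamma'$, together with Forelli--Rudin estimates.

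\textbf{Scope and main obstacle.} The main care needed is bookkeeping of which range of $s$ the identification is meant for. For $\alpha<-1$ we get $s>0$, which is the genuine Besov range. For $\alpha\ge -1$ the space $A^p_\alpha$ is the usual weighted Bergman space, and the identification holds only in the extended sense where $\mathcal B^p_s$ is defined with negative smoothness, again by \eqref{eq:derivative-characterization} with $m=0$ allowed. The one nontrivial analytic input, the independence of $m$, is quoted from the literature rather than reproved.
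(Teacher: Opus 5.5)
Your proposal is correct and follows the paper's proof essentially verbatim. Both reduce the lemma to the exponent identity $p(m-s)-1=pm+\alpha$, which gives $\gamma>-1$ if and only if $m>s$, and both quote the independence of the admissible integer $m$ from Zhao--Zhu and Zhu.

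One small slip in your scope remark: at $\alpha=-1$ (i.e.\ $s=0$), taking $m=0$ gives $\gamma=-1$, which is not admissible, so $A^p_{-1}$ is not the usual weighted Bergman space. The identification still holds there directly with $m\ge 1>s$.
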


\begin{proof}
This equivalence is part of the standard theory of Bergman spaces and analytic Besov spaces; see
\cite{ZhaoZhu2008,Zhu2005}. In fact, if $m>s$ is an admissible integer, then
\begin{equation}
\norm{f}_{\mathcal B_s^p}^p
\asymp |f(0)|^p+
\int_{\Bn}|R^mf(z)|^p
(1-|z|^2)^{p(m-s)-1}\,\dd v(z).
\label{eq:besov-norm-equivalence}
\end{equation}
Since
\[
p(m-s)-1=pm+\alpha,
\]
this is exactly the norm appearing in the definition of $A_\alpha^p$.
\end{proof}

\begin{theorem}
\label{thm:besov-composition}
Assume $p\geq1$ and $s>n/p$. Then
\begin{equation}
\mathcal B_s^p(\Bn)\hookrightarrow H^\infty(\Bn)
\label{eq:besov-embedding}
\end{equation}
and
\begin{equation}
\norm{gh}_{\mathcal B_s^p}
\leq C_{n,p,s}\norm{g}_{\mathcal B_s^p}
                    \norm{h}_{\mathcal B_s^p}.
\label{eq:besov-product}
\end{equation}
Moreover, if $f\in\mathcal B_s^p(\Bn)$ and $\Phi$ is holomorphic in a neighborhood of the compact closure of $f(\Bn)$, then $\Phi\circ f\in\mathcal B_s^p(\Bn)$ and satisfies an estimate of the form \eqref{eq:besov-product}.
\end{theorem}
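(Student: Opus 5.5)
We will work throughout with the derivative norm \eqref{eq:besov-norm-equivalence}, fixing an integer $m>s$, and prove the three assertions in order. Each later one will use the earlier ones.

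\emph{The embedding \eqref{eq:besov-embedding}.} We start from the reproducing formula
\[
f(z)=f(0)+\int_{\Bn}R^mf(w)\,K_m(z,w)\,(1-|w|^2)^{pm+\alpha}\dd v(w),
\]
where the kernel satisfies $|K_m(z,w)|\lesssim |1-\langle z,w\rangle|^{-(n+1+pm+\alpha)+\ldots}$. Equivalently, we invert $R^m$ against the weighted Bergman kernel of exponent $pm+\alpha$. Applying H\"older with exponents $p,p'$ then bounds $|f(z)|$ by $\|f\|_{\mathcal B^p_s}$ times
\[
\Bigl(\int_{\Bn}(1-|w|^2)^{b}\,|1-\langle z,w\rangle|^{-c}\dd v(w)\Bigr)^{1/p'},
\]
and the standard Forelli--Rudin estimates show this factor is bounded uniformly in $z$ exactly when $s>n/p$. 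For $p=1$ we use the $L^\infty$ kernel bound instead of H\"older. A cleaner alternative is to reduce to the known embedding $\mathcal B^p_s\hookrightarrow\mathcal B^\infty_{s-n/p}$ together with $s-n/p>0$, which gives H\"older-type boundedness. Either way this step is routine bookkeeping of exponents.

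\emph{The algebra property \eqref{eq:besov-product}.} We apply Leibniz,
\[
R^m(gh)=\sum_{k=0}^m\binom mk R^kg\,R^{m-k}h,
\]
so it suffices to bound $\int|R^kg|^p|R^{m-k}h|^p(1-|z|^2)^{p(m-s)-1}\dd v$. The endpoint terms $k=0,m$ follow from the embedding, since $\|g\|_\infty\lesssim\|g\|_{\mathcal B^p_s}$. For the mixed terms $0<k<m$, we use the pointwise growth bounds $|R^jg(z)|\lesssim\|g\|_{\mathcal B^p_s}(1-|z|^2)^{-(j-s+n/p)_+}$ (with a logarithm in the equality case) on one factor, and keep the other factor in an intermediate Besov norm $\mathcal B^p_{s'}$. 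The exponent then matches because $s>n/p$ leaves room. The cleanest rigorous route is the Gagliardo--Nirenberg type interpolation
\[
\int|R^jg|^{pm/j}(1-|z|^2)^{\ldots}\lesssim\|g\|_\infty^{p(m/j-1)}\|g\|^p_{\mathcal B^p_s},
\]
combined with H\"older using the exponents $m/k$ and $m/(m-k)$. \textbf{This interpolation inequality for mixed derivatives is the main obstacle.} We would prove it first in the disc by an integration-by-parts argument, and then on the ball along slices or via the Bergman-type kernel.

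\emph{The composition statement.} First we reduce to the reciprocal case, the one used later. For $\Phi$ holomorphic near $K=\overline{f(\Bn)}$ (compact, by the embedding), we apply the Fa\`a di Bruno formula generalizing Lemma~\ref{lem:bell}:
\[
R^m(\Phi\circ f)=\sum_{r=1}^m\Phi^{(r)}(f)\sum_{\ell_1+\cdots+\ell_r=m}C_{\ell}\,R^{\ell_1}f\cdots R^{\ell_r}f .
\]
The coefficients $\Phi^{(r)}(f)$ are bounded on $K$, and each product is controlled in $L^p((1-|z|^2)^{p(m-s)-1}\dd v)$ by the same interpolation/H\"older estimate as in the product step. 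This gives
\[
\|\Phi\circ f\|_{\mathcal B^p_s}\le C(\Phi,K)\bigl(1+\|f\|_{\mathcal B^p_s}\bigr)^m .
\]
Alternatively, the Cauchy integral $\Phi(f)=\frac1{2\pi i}\oint\Phi(\zeta)(\zeta-f)^{-1}d\zeta$ reduces everything to inverting $\zeta-f$ in the Banach algebra $\mathcal B^p_s$. The spectrum of $f$ there is $\overline{f(\Bn)}$ once we know that $1/g\in\mathcal B^p_s$ whenever $\inf|g|>0$, and that fact is again given by the Fa\`a di Bruno estimate.
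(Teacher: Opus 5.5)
Your proposal is correct in outline, but it takes a different route from the paper. The paper gives no proof of its own. It cites the Besov literature and sketches the product estimate through an analytic Bony decomposition $gh=T_gh+T_hg+\mathcal R(g,h)$ into Littlewood--Paley blocks: the paraproducts are controlled by the $H^\infty$ embedding and the remainder by $s>n/p$. The composition theorem is simply quoted for $p\ge1$.

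You work with the radial-derivative norm throughout:
\begin{itemize}
\item for the embedding, you invert $R^m$ against the weighted Bergman kernel and apply H\"older and Forelli--Rudin; the exponents close exactly when $(n+1+\gamma)/p<m$, i.e.\ $\alpha<-n-1$;
\item for products, you use Leibniz plus H\"older;
\item for $\Phi\circ f$, you use the Fa\`a di Bruno analogue of Lemma~\ref{lem:bell}.
\end{itemize}
This avoids Littlewood--Paley theory on the ball, which is delicate for $p\ne2$. It is also exactly the mechanism the paper itself uses at the critical index in Theorem~\ref{thm:critical-reciprocal}.

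The step you flag as the main obstacle can be avoided. Since $R^jf\in A^p_{pj+\alpha}$, $n+1+\alpha<0$ and $j<m$,
\[
\frac{n+1+pj+\alpha}{p}=j+\frac{n+1+\alpha}{p}<j+\frac{j}{m}\cdot\frac{n+1+\alpha}{p}=\frac{n+1+pm+\alpha}{mp/j}.
\]
Hence \eqref{eq:bergman-besov-embedding}, followed by the trivial inclusion $A^q_{b'}\subset A^q_b$ for $b'<b$, gives $\|R^jf\|_{L^{mp/j}(d\mu_\gamma)}\lesssim\|f\|_{\mathcal B^p_s}$ for $1\le j\le m$. This is just as in Proposition~\ref{prop:critical-embedding-pgreater1}, with no $\|f\|_\infty$ factor. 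H\"older with $\sum_i\ell_i/(mp)=1/p$ then handles every Leibniz and Fa\`a di Bruno term.

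Your pointwise alternative for products also works, provided you put the growth bound on the factor with fewer derivatives and take $m>2n/p$. Then the other factor is measured in $\mathcal B^p_{s'}$ with $s'=\max(s-k,n/p)<s$, using $m-k>s'$ derivatives (up to an $\varepsilon$ in the logarithmic case). If you put the bound on the higher-order factor instead, the leftover weight $(1-|z|^2)^{p(m-k)-n-1}$ need not be integrable, e.g.\ for $k=m-1$ and $p\le n$.

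What your Gagliardo--Nirenberg inequality would add is a tame estimate: linear in $\|f\|_{\mathcal B^p_s}$, with constant depending on $\|f\|_\infty$ and $K$. That is sharper than your $(1+\|f\|_{\mathcal B^p_s})^m$, but the statement does not need it.
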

\begin{remark}
Besov product estimates used above can be found in
\cite{BahouriCheminDanchin2011,Triebel2006},
and definitions of holomorphic Bergman spaces can be found in
\cite{ZhaoZhu2008}.
The composition result used below is available in the Banach range; see
\cite{Triebel2006,ZhaoZhu2008}.
In fact, the product can be decomposed into analytic
Littlewood--Paley blocks:
\begin{equation}
gh=T_g h+T_h g+\mathcal R(g,h).
\label{eq:bony-decomposition}
\end{equation}
More specifically, write
\[
g=\sum_{j\geq -1}\Delta_jg,\qquad h=\sum_{j\geq -1}\Delta_jh,\qquad
S_{j-1}g=\sum_{k<j-1}\Delta_kg,
\]
where $\Delta_j$ are Littlewood--Paley frequency blocks. Then
\[
T_gh=\sum_j S_{j-1}g\,\Delta_jh,\qquad
T_hg=\sum_j S_{j-1}h\,\Delta_jg,
\]
\[
\mathcal R(g,h)
 =\sum_j\Delta_jg\,\widetilde\Delta_jh,\qquad
\widetilde\Delta_jh=\Delta_{j-1}h+\Delta_jh+\Delta_{j+1}h.
\]
Here $T_gh$ and $T_hg$ are paraproducts, corresponding respectively to low--high frequency interactions (the low frequencies of $g$ multiplied by the high frequencies of $h$, and the reverse situation). The embedding into $H^\infty$ controls these two paraproducts:
\[
\|T_gh\|_{\mathcal B_s^p}\lesssim
\|g\|_\infty\|h\|_{\mathcal B_s^p},\qquad
\|T_hg\|_{\mathcal B_s^p}\lesssim
\|h\|_\infty\|g\|_{\mathcal B_s^p}.
\]
This is the analytic version of Bony's decomposition; see \cite{BahouriCheminDanchin2011,Bony1981}.
Since $s>n/p$, the remainder $\mathcal R(g,h)$ is summable:
\[
\|\mathcal R(g,h)\|_{\mathcal B_s^p}
\lesssim
\|g\|_{\mathcal B_s^p}\|h\|_{\mathcal B_s^p}.
\]
Thus, \eqref{eq:besov-product} is a global function-space estimate.

In a given algebra, the algebra property alone does not imply that the space is closed under taking reciprocals. One also needs the analytic Besov composition theorem: if $s>n/p$, $f\in\mathcal B_s^p$, and $\Phi$ is holomorphic in a neighborhood of the compact closure of
$f(\Bn)$, then
\begin{equation}
\Phi\circ f\in\mathcal B_s^p.
\label{eq:besov-composition}
\end{equation}
More precisely, it can be written as
\begin{equation}
\norm{\Phi\circ f}_{\mathcal B_s^p}
\leq
C_{n,p,s,\Phi,K}\,
\Psi\!\left(1+\norm{f}_{\mathcal B_s^p}\right),
\label{eq:besov-composition-estimate}
\end{equation}
where $K$ contains the range of $f$, and \[\Psi:[1,\infty)\longrightarrow[0,\infty)\] is a finite, monotonically increasing function. 
\end{remark}
\begin{theorem}
\label{thm:reciprocal-supercritical}
Assume $\alpha<-n-1$ and $p\geq1$.
If $f\in A_\alpha^p(\Bn)$ and $\inf_{\Bn}|f|>0$,
then
\[
1/f\in A_\alpha^p(\Bn).
\]
\end{theorem}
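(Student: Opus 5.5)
The plan is to move the problem into the Besov setting of Lemma~\ref{lem:besov-identification} and then invoke the composition part of Theorem~\ref{thm:besov-composition}.

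\emph{Step 1: the smoothness index.} With $s=-(\alpha+1)/p$, the hypothesis $\alpha<-n-1$ gives $-(\alpha+1)>n$, and hence
\[
s=\frac{-(\alpha+1)}{p}>\frac{n}{p}.
\]
By Lemma~\ref{lem:besov-identification}, the space $A_\alpha^p(\Bn)$ coincides with $\mathcal B_s^p(\Bn)$ with equivalent norms. The norm comparison \eqref{eq:besov-norm-equivalence} uses the same admissible integer $m$ as in \eqref{eq:derivative-characterization}; this requires $m>s$, which is equivalent to $pm+\alpha>-1$.

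\emph{Step 2: boundedness of the range.} Since $p\ge1$ and $s>n/p$, the embedding \eqref{eq:besov-embedding} gives $f\in H^\infty(\Bn)$. Put $M=\|f\|_\infty$ and $c=\inf_{\Bn}|f|>0$. Then
\[
f(\Bn)\subset K:=\{w\in\C:\ c\le |w|\le M\},
\]
and $K$ is a compact set, so the closure of $f(\Bn)$ is compact and contained in $K$. The function $\Phi(w)=1/w$ is holomorphic on the open neighborhood $U=\{c/2<|w|<2M\}$ of $K$.

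\emph{Step 3: composition.} The composition statement of Theorem~\ref{thm:besov-composition}, in the form \eqref{eq:besov-composition}--\eqref{eq:besov-composition-estimate}, now yields $1/f=\Phi\circ f\in\mathcal B_s^p(\Bn)$. Translating back through Lemma~\ref{lem:besov-identification} gives $1/f\in A_\alpha^p(\Bn)$. Concretely, this means $R^m(1/f)\in L^p(d\mu_\gamma)$ with $\gamma=\alpha+pm$.

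\emph{Main obstacle.} The only delicate point is checking that the hypotheses of the composition theorem really hold. The $H^\infty$ bound is what makes the closure of the range compact; without it, the condition $\inf|f|>0$ alone would not put $1/w$ holomorphic near $\overline{f(\Bn)}$. This is exactly why the restrictions $p\ge1$ (the Banach range) and $s>n/p$ enter.

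If one prefers not to rely on the black-box composition theorem, a more hands-on route is available. Expand $1/w$ as a power series about a point $w_0$, and control the resulting series in $\mathcal B_s^p$ using the algebra estimate \eqref{eq:besov-product}. A direct Neumann series only works if the range of $f$ lies in a small disk, which need not be the case here. I would therefore fall back on the estimate $\|g^k\|_{\mathcal B_s^p}\lesssim k^{N}\|g\|_\infty^{k-1}\|g\|_{\mathcal B_s^p}$. This bound follows from the paraproduct bounds $\|T_gh\|\lesssim\|g\|_\infty\|h\|$ recorded after \eqref{eq:bony-decomposition}, and it is combined with a Runge-type approximation of $1/w$ on $K$ by polynomials in a suitable variable. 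In that version, the hardest step is verifying the polynomial growth in $k$.
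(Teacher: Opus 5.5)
Your proposal is correct and is essentially the paper's own proof. You note that $\alpha<-n-1$ gives $s=-(\alpha+1)/p>n/p$, that the embedding $\mathcal B_s^p(\Bn)\hookrightarrow H^\infty(\Bn)$ places $f(\Bn)$ in the compact annulus $\{c\le|\zeta|\le M\}$, and that the composition theorem applied to $\Phi(\zeta)=1/\zeta$ then gives $1/f\in\mathcal B_s^p(\Bn)=A_\alpha^p(\Bn)$. The fallback sketch is not needed, and as written it would need repair: $\overline{f(\Bn)}$ can surround the origin, and on such a set $1/w$ is not a uniform limit of polynomials in $w$.
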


\begin{proof}
Since $\alpha<-n-1$, we have $s>n/p$.
By \eqref{eq:derivative-characterization} and \eqref{eq:besov-embedding},
\[
f\in\mathcal B_s^p(\Bn)\subset H^\infty(\Bn).
\]
Hence,
\[
M:=\|f\|_\infty<\infty .
\]
Moreover,
\[
f(\Bn)\subset K_{c,M}:=\{\zeta\in\mathbb C:c\le |\zeta|\le M\}.
\]
The map $\Phi(\zeta)=1/\zeta$ is holomorphic in a neighborhood of
$K_{c,M}$. Applying \eqref{eq:besov-composition}, we obtain
\[
1/f\in\mathcal B_s^p(\Bn).
\]
Using \eqref{eq:derivative-characterization} again, we conclude that
\[
1/f\in A_\alpha^p(\Bn).
\]
\end{proof}

\begin{corollary}
\label{cor:affine-point}
When $p\geq1$ and $\alpha=-n-p-1$, if $f\in A_\alpha^p(\Bn)$ satisfies
\(
\inf_{\Bn}|f|>0,
\)
then
$1/f\in A_\alpha^p(\Bn)$.
\end{corollary}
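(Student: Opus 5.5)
The corollary is the special value $\alpha=-n-p-1$, and the plan is to show that it falls inside the range covered by Theorem~\ref{thm:reciprocal-supercritical}. That theorem requires only $p\geq1$ and $\alpha<-n-1$. Since $p>0$, we have
\[
\alpha=-n-p-1<-n-1 .
\]
Equivalently, in terms of the Besov smoothness \eqref{eq:besov-smoothness},
\[
s=-\frac{\alpha+1}{p}=\frac{n+p}{p}=\frac np+1>\frac np .
\]
So the hypotheses of Theorem~\ref{thm:reciprocal-supercritical} hold.

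Concretely, I would proceed in three steps.
\begin{enumerate}
\item Identify $A^p_{-n-p-1}(\Bn)$ with $\mathcal B^p_{n/p+1}(\Bn)$ via Lemma~\ref{lem:besov-identification}.
\item Use the embedding \eqref{eq:besov-embedding} to get $f\in H^\infty$. Together with $\inf|f|=c>0$, this places $f(\Bn)$ in the annulus $K_{c,M}$.
\item Apply the composition result \eqref{eq:besov-composition} with $\Phi(\zeta)=1/\zeta$, then translate back through \eqref{eq:derivative-characterization}.
\end{enumerate}
In practice the proof is just a citation of Theorem~\ref{thm:reciprocal-supercritical}.

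The point of stating this value separately is that $\alpha=-n-p-1$ is exactly the threshold in Theorem~\ref{thm:affine-obstruction}. At this value the exponent $-(\alpha+n+1+p)/p$ vanishes, so the family $f_B$ no longer rules out the uniform estimate \eqref{eq:affine-obstruction-estimate}. The corollary asserts that the reciprocal property nonetheless holds at this critical value. If one wanted more, the composition estimate \eqref{eq:besov-composition-estimate} also yields a bound on $\|1/f\|$ in terms of $c$ and $\|f\|_{\mathcal B^p_s}$. That bound may be nonlinear, through $\Psi$, so it does not conflict with Theorem~\ref{thm:affine-obstruction}.

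There is essentially no obstacle. The only point to check is that the strict inequality $\alpha<-n-1$ really holds, which is immediate since $p>0$. A second point is that $p\ge 1$ is needed to stay in the Banach range where the composition theorem is quoted, and this is assumed in the corollary.
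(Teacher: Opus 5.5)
Your proposal is correct and matches the paper's proof. The paper likewise notes that $-n-p-1<-n-1$ and cites Theorem~\ref{thm:reciprocal-supercritical}. Your side remark is also accurate: at $\alpha=-n-p-1$ the exponent $-(\alpha+n+1+p)/p$ from Theorem~\ref{thm:affine-obstruction} is exactly zero.
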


\begin{proof}
Since $-n-p-1<-n-1$, apply Theorem~\ref{thm:reciprocal-supercritical}.
\end{proof}

This section covers $\alpha>-p-1$ and $\alpha<-n-1$. The complement of their union is
\[
\{\alpha:\alpha\leq-p-1\}\cap
\{\alpha:\alpha\geq-n-1\}.
\]
When $p<n$, this is $[-n-1,-p-1]$; when $p=n$, it is the singleton
$\{-n-1\}$; and when $p>n$, it is empty.
The unresolved parameter range is
\(
-n-1\leq\alpha\leq-p-1.
\)

\subsection{$\alpha =-n-1$}

We now give sufficient criteria at the critical index $\alpha=-n-1$. The key input is the corresponding analytic Bergman--Besov embedding at the limiting smoothness.

The strict Besov algebra theorem does not cover the endpoint $s=n/p$.
For $p>1$, the required embedding follows from the analytic Bergman--Besov embedding theorem.
For $0<p\leq1$, additional assumptions are needed.

\begin{assumption}\label{ass:critical-radial-embedding}
For $0<p<1$, let $m$ be an integer satisfying $pm>n$. Set
\[
\gamma=pm-n-1>-1,
\qquad d\mu_\gamma(z)=(1-|z|^2)^\gamma\,\dd v(z).
\]
If $R^m f\in L^p(\Bn,d\mu_\gamma)$, then for every $1\le j\le m$,
\[
R^j f\in L^{q_j}(\Bn,d\mu_\gamma),
\qquad q_j=\frac{mp}{j},
\]
and
\[
\|R^j f\|_{L^{q_j}(d\mu_\gamma)}
\le C_{n,p,m,j}\|R^m f\|_{L^p(d\mu_\gamma)}.
\]
\end{assumption}

\noindent By the analytic Bergman--Besov embedding theorem,
\begin{equation}
A_a^p(\Bn)\hookrightarrow A_b^q(\Bn),
\qquad 0<p\le q<\infty,
\qquad
\frac{n+1+a}{p}=\frac{n+1+b}{q},
\label{eq:bergman-besov-embedding}
\end{equation}
where both sides are defined through radial derivatives. For $1<p\le q$, this is the usual analytic Bergman--Besov embedding; see the relevant embedding chapters in \cite{ZhaoZhu2008} and \cite{Zhu2005}. The case $0<p<1$ requires some additional assumptions.
\begin{proposition}\label{prop:critical-embedding-pgreater1}
Assume that $1\leq p<\infty$ and $pm>n$. Let $\gamma=pm-n-1$,
\[
a=pj-n-1,
\qquad b=\gamma=pm-n-1,
\qquad q=\frac{mp}{j}.
\]
Then
\begin{equation}
R^j f\in L^{mp/j}(\Bn,d\mu_\gamma),
\qquad 1\le j\le m,
\label{eq:critical-derivative}
\end{equation}
\end{proposition}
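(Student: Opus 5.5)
The statement implicitly assumes $R^m f\in L^p(\Bn,d\mu_\gamma)$, i.e.\ $f\in A^p_{-n-1}(\Bn)$ via \eqref{eq:derivative-characterization}. The plan is to deduce \eqref{eq:critical-derivative} from the Bergman--Besov embedding \eqref{eq:bergman-besov-embedding}. The idea is to read $R^jf$ as the $j$-th order derivative representative of $f$ in a suitable weighted Bergman space and compare exponents.

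First fix $1\le j\le m$. Since $f\in A^p_{-n-1}$, the derivative characterization applies with any admissible order $k$ satisfying $pk-n-1>-1$. Take $k=m$, which is admissible because $pm>n$. This shows that $f$ belongs to $A^p_{-n-1}$, the Besov space of smoothness $s=n/p$ by Lemma~\ref{lem:besov-identification}.

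Next compute the embedding exponents. In \eqref{eq:bergman-besov-embedding} take source index $-n-1$ at exponent $p$ and target index $\beta$ at exponent $q=mp/j\ge p$. The balance condition
\[
\frac{n+1-n-1}{p}=\frac{n+1+\beta}{q}
\]
gives $\beta=-n-1$, so one obtains $A^p_{-n-1}\hookrightarrow A^{q}_{-n-1}$ with $q=mp/j$. Then apply the derivative characterization in the target space $A^q_{-n-1}$ with order $j$. The resulting weight is $qj-n-1=pm-n-1=\gamma$, and this order is admissible since $qj=pm>n$. Hence $R^jf\in L^{mp/j}(d\mu_\gamma)$, with norm controlled by $|f(0)|+\|R^mf\|_{L^p(d\mu_\gamma)}$. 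The same computation can be phrased with the indices $a=pj-n-1$ and $b=\gamma$ of the statement, viewing $R^jf$ itself as the function being embedded. Concretely, $R^jf$ lies in the space defined by $R^{m-j}(R^jf)\in L^p(d\mu_\gamma)$; then $\frac{n+1+a}{p}=j$, and the balance holds because $\frac{n+1+b}{q}=\frac{pm}{mp/j}=j$.

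The main obstacle is rigor at the endpoint. For $p>1$ the embedding is standard, as \cite{ZhaoZhu2008,Zhu2005} indicate. For $p=1$, the range the statement includes, one still needs \eqref{eq:bergman-besov-embedding} with $p=1\le q$. I would justify this through the atomic decomposition or Carleson-measure characterization for $A^1$-type spaces, which holds for $0<p\le q$, or through the fact that the embedding for $p\le q$ follows from the pointwise estimate $|g(z)|\lesssim (1-|z|^2)^{-(n+1+a)/p}\|g\|$ interpolated against the $L^p$ norm. The key is the pointwise growth bound
\[
|R^jf(z)|(1-|z|^2)^{j}\lesssim \|f\|,
\]
which follows from the critical smoothness $s=n/p$. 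One then writes $|R^jf|^{q}=|R^jf|^{q-p}|R^jf|^p$ with $R^jf\in L^p(d\mu_{a'})$. This crude splitting loses at the endpoint, so it must be replaced by the genuine (Hardy--Littlewood type) embedding theorem; invoking that theorem is the essential step I would cite rather than reprove.
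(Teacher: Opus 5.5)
Your proposal is correct and is essentially the paper's proof. The paper applies \eqref{eq:bergman-besov-embedding} to $g=R^jf$, which lies in $A^p_{a}$ with $a=pj-n-1$ because $R^{m-j}g=R^mf$ and $a+p(m-j)=\gamma$, and lands in the honest Bergman space $A^q_\gamma$ since $\gamma>-1$; this is exactly your rephrasing with $a,b$, and your primary variant ($A^p_{-n-1}\hookrightarrow A^q_{-n-1}$ followed by the order-$j$ derivative characterization, admissible since $qj=pm>n$) is an equivalent use of the same embedding.

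Your concern about $p=1$ is unnecessary, and your pointwise splitting actually works if you apply it to $R^mf$ instead of $R^jf$. Since $|R^mf(z)|(1-|z|^2)^m\lesssim\|R^mf\|_{L^p(d\mu_\gamma)}$ and $qm-n-1=(q-p)m+\gamma$, it gives $R^mf\in L^q(d\mu_{qm-n-1})$, i.e.\ $f\in A^q_{-n-1}$, for every $q\ge p$. It fails at order $j$ only when $pj\le n$, where $\int_{\Bn}|R^jf|^p(1-|z|^2)^{pj-n-1}\,dv$ is infinite unless $R^jf\equiv0$.
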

Indeed, in this case $1\leq p\le q$, and
\begin{equation}
\frac{n+1+a}{p}=j=\frac{n+1+b}{q}.
\label{eq:critical-parameter-identity}
\end{equation}
It follows directly from the definition that $g=R^j f$ belongs to $A_a^p$: taking $m-j$ derivatives of $g$ gives
\[
R^{m-j}g=R^m f,
\qquad a+p(m-j)=pm-n-1=\gamma.
\]
Applying \eqref{eq:bergman-besov-embedding} to $g$ yields $g\in A_b^q$. Since $b=\gamma>-1$, $A_b^q$ is the usual weighted Bergman space, and hence $g\in L^q(d\mu_\gamma)$, as claimed. This is the Bergman--Besov form of the critical embedding.

\begin{proof}
The parameter identity in \eqref{eq:critical-parameter-identity} satisfies the conditions of the embedding \eqref{eq:bergman-besov-embedding}; moreover, since $j\le m$, we have $q=mp/j\ge p$.
\end{proof}

\begin{theorem}
\label{thm:critical-reciprocal}
Let $1\leq p<\infty$. If
\[
f\in A_{-n-1}^p(\Bn),
\qquad \inf_{z\in\Bn}|f(z)|\ge c>0,
\]
then
\[
\frac1f\in A_{-n-1}^p(\Bn).
\]
\end{theorem}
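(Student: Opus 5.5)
Choose an integer $m$ with $pm>n$; then $\gamma=pm-n-1>-1$, so $d\mu_\gamma$ is a finite measure and, by \eqref{eq:derivative-characterization}, $f\in A^p_{-n-1}$ means exactly $R^mf\in L^p(d\mu_\gamma)$. The plan is to show that $R^m(1/f)\in L^p(d\mu_\gamma)$ by expanding it with Lemma~\ref{lem:bell} and bounding each monomial using H\"older's inequality, with exponents supplied by Proposition~\ref{prop:critical-embedding-pgreater1}. This replaces the $H^\infty$ embedding used in Theorem~\ref{thm:reciprocal-supercritical}, which fails at the endpoint $s=n/p$.

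First, by \eqref{eq:bell-expansion} and $|f|\ge c$,
\[
|R^m(1/f)|\le \sum_{r=1}^m c^{-(r+1)}\sum_{\ell_1+\cdots+\ell_r=m}|C_{\ell_1,\ldots,\ell_r}|\,|R^{\ell_1}f|\cdots|R^{\ell_r}f|.
\]
Since the sum is finite, it suffices to show that each product $P=R^{\ell_1}f\cdots R^{\ell_r}f$ with $\ell_1+\cdots+\ell_r=m$ lies in $L^p(d\mu_\gamma)$.

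Second, Proposition~\ref{prop:critical-embedding-pgreater1} gives $R^{\ell_i}f\in L^{q_i}(d\mu_\gamma)$ with $q_i=mp/\ell_i$, together with the norm bound $\|R^{\ell_i}f\|_{L^{q_i}}\lesssim\|R^mf\|_{L^p(d\mu_\gamma)}$ inherited from the embedding \eqref{eq:bergman-besov-embedding}. The exponents satisfy
\[
\sum_{i=1}^r\frac{p}{q_i}=\sum_{i=1}^r\frac{\ell_i}{m}=1 .
\]
Hence the generalized H\"older inequality, applied to $|P|^p=\prod_i|R^{\ell_i}f|^p$ with exponents $q_i/p$, yields
\[
\|P\|_{L^p(d\mu_\gamma)}\le\prod_{i=1}^r\|R^{\ell_i}f\|_{L^{q_i}(d\mu_\gamma)}<\infty.
\]
The exponent bookkeeping is exactly balanced at $\alpha=-n-1$: scaling the weight so that $(n+1+a)/p=j$ is precisely what makes the H\"older exponents sum to one. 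Summing over all partitions gives $R^m(1/f)\in L^p(d\mu_\gamma)$. Applying \eqref{eq:derivative-characterization} once more, and noting that the constant term $1/f(0)$ is harmless, we conclude $1/f\in A^p_{-n-1}$. We even obtain the quantitative bound
\[
\|R^m(1/f)\|_{L^p(d\mu_\gamma)}\lesssim\sum_{r=1}^m c^{-(r+1)}\|R^mf\|^r_{L^p(d\mu_\gamma)}.
\]

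The main obstacle is justifying Proposition~\ref{prop:critical-embedding-pgreater1} uniformly in $1\le j\le m$. In particular, one must check that $g=R^jf$ belongs to $A^p_a$ with $a=pj-n-1$, which may be $\le -1$, so that this space is itself defined through derivatives. One must also check that the embedding \eqref{eq:bergman-besov-embedding} holds in that generalized range and carries a norm estimate. For $j=m$ the statement is trivial, since $q=p$. For $j<m$ we rely on the cited Bergman--Besov embedding for $p\ge1$. The case $p=1$ with $q>1$ should be double-checked, but the embedding $A^p_a\hookrightarrow A^q_b$ for $p\le q$ is standard, including $p=1$. A minor point is that the Lemma~\ref{lem:bell} expansion requires $f$ to be zero-free, which holds because $|f|\ge c$.
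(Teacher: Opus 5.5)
Your proposal is correct and follows the paper's proof essentially step for step: you choose $m$ with $pm>n$, expand $R^m(1/f)$ by Lemma~\ref{lem:bell}, bound each product by the generalized H\"older inequality with exponents $q_\ell=mp/\ell$ from Proposition~\ref{prop:critical-embedding-pgreater1} (using $\sum_i \ell_i/(mp)=1/p$), and conclude via \eqref{eq:derivative-characterization}. Your justification that $R^jf\in A^p_{pj-n-1}$ (because $R^{m-j}R^jf=R^mf$ and $a+p(m-j)=\gamma$) matches the paper's, and the $p=1$ case you flag is fine, since the embedding \eqref{eq:bergman-besov-embedding} holds for all $0<p\le q$.
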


\begin{proof}
Choose an integer $m$ satisfying $pm>n$, and set
\[
\gamma=pm-n-1.
\]
The derivative characterization \eqref{eq:derivative-characterization} gives
\begin{equation}
R^m f\in L^p(\Bn,d\mu_\gamma).
\label{eq:critical-high-derivative}
\end{equation}
For $1\le j\le m$, let $q_j=mp/j$. Proposition~\ref{prop:critical-embedding-pgreater1} gives
\begin{equation}
R^j f\in L^{q_j}(\Bn,d\mu_\gamma).
\label{eq:critical-lower-derivatives}
\end{equation}

By Lemma~\ref{lem:bell},
\begin{equation}
R^m(f^{-1})
 =
 \sum_{r=1}^{m}f^{-(r+1)}
 \sum_{\ell_1+\cdots+\ell_r=m}
 C_{\ell_1,\ldots,\ell_r}
 \prod_{i=1}^{r}R^{\ell_i}f,
 \label{eq:critical-bell-expansion}
\end{equation}
where the inner sum runs over positive integer partitions. For each such partition, \eqref{eq:critical-lower-derivatives} and $|f|\ge c$ imply
\[
\left\|f^{-(r+1)}\prod_{i=1}^{r}R^{\ell_i}f\right\|_{L^p(d\mu_\gamma)}
\le
c^{-(r+1)}\prod_{i=1}^{r}
\|R^{\ell_i}f\|_{L^{q_{\ell_i}}(d\mu_\gamma)},
\]
because
\[
\sum_{i=1}^{r}\frac1{q_{\ell_i}}
 =\sum_{i=1}^{r}\frac{\ell_i}{mp}=\frac1p.
\]
Therefore every term in \eqref{eq:critical-bell-expansion} belongs to $L^p(d\mu_\gamma)$, and consequently
\[
R^m(f^{-1})\in L^p(\Bn,d\mu_\gamma).
\]
By \eqref{eq:derivative-characterization},
\[
\frac1f\in A_{-n-1}^p(\Bn).
\]
\end{proof}

The same calculation extends to $0<p<1$ whenever the required quasi-Banach H\"older estimate is available. For general $p$ and $n$, the parameter range not covered by the unconditional results above can only be
\[
-n-1<\alpha\leq-p-1,
\]
which is nonempty only when $p<n$. The following special case is completely covered by the preceding subcritical, supercritical, and critical-index results.

\subsection{The Cases of One- and Two-Dimensional Complex Spaces}
\begin{theorem}
\label{thm:disk-reciprocal-pge1}
Let $1\leq p<\infty$ and $\alpha\in\mathbb R$. If
\[
f\in A_\alpha^p(\mathbb D),
\qquad \inf_{z\in\mathbb D}|f(z)|\ge c>0,
\]
then
\[
\frac1f\in A_\alpha^p(\mathbb D).
\]
\end{theorem}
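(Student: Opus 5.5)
The plan is to specialize to $n=1$ and split the real line of $\alpha$ into the three ranges already treated in this section. By the discussion after Corollary~\ref{cor:affine-point}, the unresolved set of parameters is
\[
\{\alpha:\alpha\le -p-1\}\cap\{\alpha:\alpha\ge -n-1\},
\]
which for $n=1$ becomes $[-2,-p-1]$. Since $p\ge1$, we have $-p-1\le -2$, so this set is empty when $p>1$ and equals $\{-2\}$ when $p=1$. Hence, in all cases, every $\alpha\in\R$ falls into one of the following three cases:
\[
\alpha>-p-1,\qquad \alpha<-2=-n-1,\qquad \alpha=-2=-n-1 .
\]

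The key steps are, in order:
\begin{enumerate}
\item If $\alpha>-p-1$, apply Theorem~\ref{thm:subcritical-reciprocal} with $n=1$. That theorem holds for every $0<p<\infty$, so it applies here directly.
\item If $\alpha<-2$, apply Theorem~\ref{thm:reciprocal-supercritical} with $n=1$. This is legitimate because $p\ge1$ and $\alpha<-n-1$. Through Lemma~\ref{lem:besov-identification}, the condition $\alpha<-2$ means $s=-(\alpha+1)/p>1/p=n/p$, which places us in the range of the Besov composition estimate of Theorem~\ref{thm:besov-composition}.
\item If $\alpha=-2$, apply Theorem~\ref{thm:critical-reciprocal} with $n=1$. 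That theorem covers $1\le p<\infty$ at the critical index $\alpha=-n-1$. Its proof uses Proposition~\ref{prop:critical-embedding-pgreater1} with $m$ chosen so that $pm>1$, together with the Bell-type expansion of Lemma~\ref{lem:bell} and H\"older's inequality with exponents $q_{\ell_i}=mp/\ell_i$.
\end{enumerate}

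The only point needing care is checking that these three cases exhaust $\R$ when $n=1$ and $p\ge1$. For $p>1$, any $\alpha\ge-2$ satisfies $\alpha\ge -2>-p-1$. For $p=1$, the cases $\alpha>-2$, $\alpha<-2$ and $\alpha=-2$ are trivially exhaustive.

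The substantive obstacle is therefore not in this assembly. It lies in the inputs being invoked, above all in the critical case $\alpha=-2$ at $p=1$. There, Proposition~\ref{prop:critical-embedding-pgreater1} relies on the Bergman--Besov embedding \eqref{eq:bergman-besov-embedding} starting from $p=1$, which is the endpoint of the Banach range.

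I would verify the endpoint case directly for $n=1$, $p=1$, $m=2$, $\gamma=0$. Here one needs $Rf\in L^2(dv)$ whenever $R^2f\in L^1(dv)$. This is the classical embedding of the space of functions with $f''\in L^1(dA)$ into the Dirichlet space, which is known for the disk, so the argument closes.
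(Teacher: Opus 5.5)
Your proposal is correct and is essentially the paper's proof. It uses the same split of $\alpha$ into $\alpha>-p-1$, $\alpha<-2$, and (only when $p=1$) $\alpha=-2$, handled by Theorems~\ref{thm:subcritical-reciprocal}, \ref{thm:reciprocal-supercritical} and \ref{thm:critical-reciprocal} respectively. Your extra check that $R^2f\in L^1(dv)$ forces $Rf\in L^2(dv)$ on the disk (the classical inclusion of $\{f: f''\in L^1\}$ into the Dirichlet space) is a worthwhile addition. The paper itself only asserts the Bergman--Besov embedding for $1<p\le q$, yet it invokes that embedding at $p=1$.
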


\begin{proof}
We use the subcritical reciprocal theorem, the supercritical reciprocal theorem, and the critical-index theorem proved above. Since the disk is the unit ball in one complex dimension, the critical index is
\[
-n-1=-2.
\]

First suppose that $1<p<\infty$. Then $-p-1<-2$. If $\alpha<-2$, Theorem 3.5 applies because $\alpha<-n-1=-2$. If $\alpha\ge -2$, then
\[
\alpha\ge -2>-p-1,
\]
so  Theorem3.1 applies. These two cases cover every real $\alpha$.

It remains to consider $p=1$. If $\alpha>-2=-p-1$, Theorem3.1 applies. If $\alpha<-2=-n-1$, Theorem3.5 applies. At the sole remaining value $\alpha=-2=-n-1$, Theorem3.9 applies, since its range is now $1\le p<\infty$. Thus the reciprocal conclusion holds for every $\alpha\in\mathbb R$ when $p=1$ as well.

Combining the cases proves the theorem for all $1\le p<\infty$.
\end{proof}
\begin{theorem}
\label{thm:low-dimensional-p2}
Let $n=2$ and let $\alpha\in\mathbb R$. If
\[
f\in A_\alpha^2(\mathbb B_n),
\qquad \inf_{z\in\mathbb B_n}|f(z)|\ge c>0,
\]
then
\[
\frac1f\in A_\alpha^2(\mathbb B_n).
\]
\end{theorem}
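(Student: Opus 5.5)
With $n=2$ and $p=2$, the three earlier results in this section already cover every real $\alpha$, so the plan is a case split on $\alpha$ relative to the thresholds $-p-1=-3$ and $-n-1=-3$. These two thresholds coincide, which is exactly the situation $p=n$ noted after Corollary~\ref{cor:affine-point}. There the unresolved set $[-n-1,-p-1]$ reduces to the single point $\{-3\}$.

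\emph{Case $\alpha>-3$.} Here $\alpha>-p-1$, so Theorem~\ref{thm:subcritical-reciprocal} applies directly. If $\alpha>-1$, the measure $v_\alpha$ is finite and $|1/f|\le c^{-1}$. If $-3<\alpha\le-1$, we take $m=1$: then $|R(1/f)|\le c^{-2}|Rf|$ and $\gamma=\alpha+2>-1$.

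\emph{Case $\alpha<-3$.} Here $\alpha<-n-1$ and $p=2\ge1$, so Theorem~\ref{thm:reciprocal-supercritical} applies. Via Lemma~\ref{lem:besov-identification} the smoothness index is $s=-(\alpha+1)/2>1=n/p$. Hence $f$ is bounded by \eqref{eq:besov-embedding}. Since $\zeta\mapsto1/\zeta$ is holomorphic near the annulus $\{c\le|\zeta|\le\|f\|_\infty\}$, the composition theorem \eqref{eq:besov-composition} gives $1/f\in A^2_\alpha(\mathbb B_2)$.

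\emph{Case $\alpha=-3$.} This is the critical index $\alpha=-n-1$ with $p=2\ge1$, which is covered by Theorem~\ref{thm:critical-reciprocal}. Concretely, I would choose $m=2$ (so $pm=4>2$) and $\gamma=pm-n-1=1$. Proposition~\ref{prop:critical-embedding-pgreater1} then gives $Rf\in L^4(d\mu_1)$ and $R^2f\in L^2(d\mu_1)$. In formula \eqref{eq:second-reciprocal-derivative}, the term $R^2f/f^2$ is controlled by $c^{-2}|R^2f|$. The term $(Rf)^2/f^3$ is controlled by Cauchy--Schwarz: $\|(Rf)^2\|_{L^2(d\mu_1)}=\|Rf\|_{L^4(d\mu_1)}^2$. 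Together these give $R^2(1/f)\in L^2(d\mu_1)$, and hence $1/f\in A^2_{-3}(\mathbb B_2)$ by \eqref{eq:derivative-characterization}.

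Every real $\alpha$ falls into one of these three cases, which proves the theorem. The only step I expect to need genuine care is checking that the critical case really lands at the single point $\alpha=-3$ and that the endpoint embedding of Proposition~\ref{prop:critical-embedding-pgreater1} holds there. For $p=2>1$ that embedding is the standard analytic Bergman--Besov embedding \eqref{eq:bergman-besov-embedding}, applied with $a=2j-3$, $b=1$, and $q=4/j$ for $j=1,2$. So no extra assumption such as Assumption~\ref{ass:critical-radial-embedding} is needed, and the remaining verifications are routine.
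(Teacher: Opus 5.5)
Your proposal is correct and follows the paper's proof exactly. Since $-n-1=-p-1=-3$, you split $\alpha$ into the subcritical case $\alpha>-3$ (Theorem~\ref{thm:subcritical-reciprocal}), the supercritical case $\alpha<-3$ (Theorem~\ref{thm:reciprocal-supercritical}), and the single critical point $\alpha=-3$ (Theorem~\ref{thm:critical-reciprocal}); your explicit version of the last case with $m=2$, $\gamma=1$ and $Rf\in L^4(d\mu_1)$ is just that theorem's H\"older argument specialized, and the step $\|(Rf)^2\|_{L^2(d\mu_1)}=\|Rf\|_{L^4(d\mu_1)}^2$ is an identity rather than Cauchy--Schwarz, which changes nothing.
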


\begin{proof}
Since $n=p=2$, 
\[
-n-1=-3=-p-1.
\]
For $\alpha>-3$, Theorem~\ref{thm:subcritical-reciprocal} applies, while for $\alpha<-3$, Theorem~\ref{thm:reciprocal-supercritical} applies. At the remaining critical value $\alpha=-3=-n-1$, the conclusion follows from Theorem~\ref{thm:critical-reciprocal}, because $1<p<\infty$.
Thus the conclusion holds for every $\alpha\in\mathbb R$ in both dimensions.
\end{proof}

\section{The Drury--Arveson Spaces}

\subsection{The Three-Dimensional Drury--Arveson Space}

The reciprocal problem in Drury--Arveson spaces is related to multiplier and invariant subspace questions for reproducing kernel Hilbert spaces; see, for example, \cite{Arveson1998,McCullough1992}. We next establish a Hardy boundary characterization for the reciprocal problem in $H_3^2$.

It is well known that, with respect to normalized surface measure on $\partial\mathbb B_3$, the Drury--Arveson reproducing kernel is
$K(z,w)=(1-\langle z,w\rangle)^{-1}$. Denote the corresponding Hardy space by $H^2(\partial\mathbb B_3)$. For
\[
f(z)=\sum_{\alpha\in\mathbb N^3}a_\alpha z^\alpha,
\]
write the degree of a given monomial as $k=|\alpha|$. According to the monomial norm formula (see \cite{Rudin1980}),
\begin{align}
\|f\|_{H_3^2}^2
 &= \sum_{\alpha}|a_\alpha|^2\frac{\alpha!}{|\alpha|!},
\\
\|f\|_{H^2(\partial\mathbb B_3)}^2
 &\asymp \sum_{\alpha}|a_\alpha|^2
       \frac{2\alpha!}{(|\alpha|+2)!}.
\end{align}
Since $Rf=\sum_{\alpha}|\alpha|a_\alpha z^\alpha$, and
\[
\frac{2k^2}{(k+1)(k+2)}\asymp 1,\qquad k\ge1,
\]
we obtain
\begin{equation}
\|f\|_{H_3^2}^2
\asymp |f(0)|^2+\|Rf\|_{H^2(\partial\mathbb B_3)}^2.
\label{eq:DA3-radial-norm}
\end{equation}

\begin{theorem}\label{thm:DA3-reciprocal}
Let $f\in H_3^2$ and suppose that
\[
\inf_{z\in\mathbb B_3}|f(z)|\ge c>0.
\]
Then $1/f\in H_3^2$. More precisely,
\begin{equation}
\left\|\frac1f\right\|_{H_3^2}
\le C\left(c^{-1}+c^{-2}\|f\|_{H_3^2}\right),
\label{eq:DA3-reciprocal-estimate}
\end{equation}
where $C$ is independent of the choice of function.
\end{theorem}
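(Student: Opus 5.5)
The plan is to use the radial norm formula \eqref{eq:DA3-radial-norm}. It reduces the claim to showing $R(1/f)=-Rf/f^2\in H^2(\partial\mathbb B_3)$ with norm at most $C c^{-2}\|Rf\|_{H^2}$. The constant term is trivial: $|1/f(0)|\le c^{-1}$. The difficulty is that $H^2(\partial\mathbb B_3)$ is an $L^2$ space on the sphere, not an $L^2$ space with respect to a volume measure. Multiplying a function in $H^2$ by the bounded holomorphic function $1/f^2$ keeps it in $H^2$. Indeed, $H^\infty$ functions are multipliers of the Hardy space $H^2(\partial\mathbb B_3)$ with multiplier norm equal to the sup norm. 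The argument runs on the dilates $g_r(z)=g(rz)$. The product $(Rf/f^2)_r=(Rf)_r\cdot (f^{-2})_r$ is holomorphic on a neighborhood of the closed ball, and
\[
\int_{\partial\mathbb B_3}\Bigl|\frac{Rf}{f^2}(r\zeta)\Bigr|^2\,d\sigma(\zeta)\le c^{-4}\int_{\partial\mathbb B_3}|Rf(r\zeta)|^2\,d\sigma(\zeta)\le c^{-4}\|Rf\|_{H^2}^2 .
\]
Taking the supremum over $r<1$ gives $\|R(1/f)\|_{H^2}\le c^{-2}\|Rf\|_{H^2}$.

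The key steps, in order, are as follows.

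First, I will verify \eqref{eq:DA3-radial-norm} in the form needed. It should hold for every holomorphic function on $\mathbb B_3$, not only a priori elements of $H_3^2$. Both sides are computed coefficientwise through the same monomial weights. The comparison $\frac{\alpha!}{|\alpha|!}\asymp k^2\frac{2\alpha!}{(k+2)!}$ for $k=|\alpha|\ge1$ is uniform, so finiteness of one side is equivalent to finiteness of the other. The point to check is that the Hardy norm here is defined as $\sup_r\int|g(r\zeta)|^2d\sigma$. This equals $\sum_\alpha|a_\alpha|^2\|\zeta^\alpha\|^2_{L^2(\sigma)}$ by orthogonality of monomials on the sphere, using the formula $\|\zeta^\alpha\|^2=\frac{2\alpha!}{(|\alpha|+2)!}$ from \cite{Rudin1980}.

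Second, since $f$ is zero-free with $|f|\ge c$, the function $1/f$ is holomorphic on $\mathbb B_3$ with $\|1/f\|_\infty\le c^{-1}$. This gives $R(1/f)=-Rf/f^2$, which is the case $m=1$ of Lemma~\ref{lem:bell}.

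Third, I will apply the dilation estimate above, using $\|1/f^2\|_\infty\le c^{-2}$ and monotonicity in $r$ of the integral means of $|Rf|^2$. The latter holds because $|Rf|^2$ is plurisubharmonic.

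Fourth, I will combine the pieces:
\[
\|1/f\|_{H_3^2}^2\asymp |f(0)|^{-2}+\|R(1/f)\|_{H^2}^2\le c^{-2}+c^{-4}\|Rf\|_{H^2}^2\lesssim c^{-2}+c^{-4}\|f\|_{H_3^2}^2 .
\]
Taking square roots gives \eqref{eq:DA3-reciprocal-estimate}.

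I expect the only genuine obstacle to be the first step, namely justifying that \eqref{eq:DA3-radial-norm} is an equivalence valid for arbitrary holomorphic functions. It must hold in the direction where we only know $R(1/f)\in H^2$ and want to conclude $1/f\in H_3^2$. This is where dimension $3$ enters: the Drury--Arveson weight $\alpha!/|\alpha|!$ differs from the $H^2(\partial\mathbb B_3)$ weight by exactly the factor $(k+1)(k+2)/2\asymp k^2$, so that one radial derivative suffices. I will handle this by working at the level of Taylor coefficients. The homogeneous expansion of $1/f$ converges on $\mathbb B_3$, and monomials of different multi-indices are orthogonal in both norms. Hence the weighted coefficient sums can be compared term by term, with the $r$-dilation argument used only to identify $\sum|b_\alpha|^2|\alpha|^2\|\zeta^\alpha\|^2$ with $\sup_r\int|R(1/f)(r\zeta)|^2d\sigma$.
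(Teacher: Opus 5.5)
Your proposal is correct and is essentially the paper's argument: via \eqref{eq:DA3-radial-norm} you reduce to bounding $R(1/f)=-Rf/f^2$ in $H^2(\partial\mathbb B_3)$, then use that multiplication by the bounded holomorphic function $1/f^2$ has norm at most $c^{-2}$ there, and add $|1/f(0)|\le c^{-1}$. Your extra justifications make explicit what the paper leaves implicit: the coefficientwise validity of \eqref{eq:DA3-radial-norm} for arbitrary holomorphic functions (with both sides allowed to be $+\infty$), and the dilation argument for the multiplier bound.
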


\begin{proof}
Let $u=1/f$. The lower bound on $f$ implies that $u\in H^\infty(\mathbb B_3)$ and $\|u\|_\infty\le c^{-1}$. The radial chain rule gives
\[
Ru=-\frac{Rf}{f^2}.
\]
Multiplication by a bounded holomorphic function is bounded on the Hardy space, so
\[
\|Ru\|_{H^2(\partial\mathbb B_3)}
\le c^{-2}\|Rf\|_{H^2(\partial\mathbb B_3)}.
\]
Applying \eqref{eq:DA3-radial-norm} separately to $u$ and $f$, and noting that $|u(0)|\le c^{-1}$, gives
\[
\|u\|_{H_3^2}^2
\lesssim c^{-2}+c^{-4}\|Rf\|_{H^2(\partial\mathbb B_3)}^2
\lesssim c^{-2}+c^{-4}\|f\|_{H_3^2}^2.
\]
Taking square roots proves \eqref{eq:DA3-reciprocal-estimate}, and hence $u=1/f\in H_3^2$.
\end{proof}

\subsection{The Four-Dimensional Drury--Arveson Space}

The three-dimensional argument does not extend to $H_4^2$, since there the norm involves $R^2$ and the derivative formula for $1/f$ contains the uncontrolled term $(Rf)^2/f^3$. We therefore give an equivalent condition for solvability.

Let $dv$ be normalized volume measure on $\mathbb B_4$, and write
\[
 \|h\|_{A^2(\mathbb B_4)}^2:=\int_{\mathbb B_4}|h|^2\,\dd v.
\]

Write
\[
 h(z)=\sum_{\nu\in\mathbb N^4}a_\nu z^\nu=\sum_{k\geq0}h_k(z)
\]
for the homogeneous expansion of $h$. Then the Drury--Arveson norm and the monomial integral are respectively
\begin{equation}
 \|h\|_{H_4^2}^2=\sum_\nu |a_\nu|^2\frac{\nu!}{|\nu|!},
 \qquad
 \int_{\mathbb B_4}|z^\nu|^2\,\dd v(z)
 =\frac{24\,\nu!}{(|\nu|+4)!}.
\end{equation}
\begin{lemma}
\label{lem:DA4-radial-norm}
For every holomorphic function $h$,
\begin{equation}
 |h(0)|^2+\frac1{24}\|R^2h\|_{A^2(\mathbb B_4)}^2
 \le \|h\|_{H_4^2}^2
 \le |h(0)|^2+5\|R^2h\|_{A^2(\mathbb B_4)}^2.
\end{equation}
In particular, $h\in H_4^2$ if and only if $R^2h\in A^2(\mathbb B_4)$.
\end{lemma}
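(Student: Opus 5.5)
The plan is to compare both sides of the inequality monomial by monomial, using that $z^\nu$ are orthogonal both in $H_4^2$ and in $A^2(\mathbb B_4)$. First I would observe that $R^2 z^\nu=|\nu|^2 z^\nu$, so $R^2h=\sum_\nu |\nu|^2a_\nu z^\nu$ is diagonal in the monomial basis. The constant term contributes $|h(0)|^2$ to $\|h\|_{H_4^2}^2$ and is annihilated by $R^2$. By orthogonality of distinct monomials on the ball (rotation invariance of $dv$), together with the monomial integral formula stated above, we get
\[
\|R^2h\|_{A^2(\mathbb B_4)}^2=\sum_{|\nu|\ge1}|a_\nu|^2\,|\nu|^4\,\frac{24\,\nu!}{(|\nu|+4)!}.
\]
The Drury--Arveson norm is $\sum_\nu|a_\nu|^2\nu!/|\nu|!$.

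The lemma then reduces to a termwise comparison of the weights. For $k=|\nu|\ge1$, the ratio of the $A^2$-weight of $R^2z^\nu$ to the $H_4^2$-weight of $z^\nu$ is
\[
\rho(k)=\frac{24\,k^4\,k!}{(k+4)!}=24\prod_{j=1}^4\frac{k}{k+j}.
\]
Each factor $k/(k+j)$ is increasing in $k$ and bounded by $1$. Hence $\rho$ is increasing with $\rho(k)<24$, and its minimum is $\rho(1)=24/120=1/5$. So $\tfrac15\le\rho(k)\le 24$ for all $k\ge1$.

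Multiplying by $|a_\nu|^2\nu!/|\nu|!$ and summing over $|\nu|\ge1$ gives
\[
\tfrac1{24}\|R^2h\|_{A^2}^2\le \sum_{|\nu|\ge1}|a_\nu|^2\frac{\nu!}{|\nu|!}\le 5\|R^2h\|_{A^2}^2.
\]
Adding $|h(0)|^2$ yields both inequalities. All sums have nonnegative terms, so the inequalities are valid in $[0,\infty]$ for any holomorphic $h$, and the ``in particular'' statement follows. There is no real obstacle. The only points needing care are the justification of orthogonality, and hence the exchange of integral and sum, which I would handle by integrating over $|z|<r$ and letting $r\to1$ by monotone convergence, and the check that the extremal value $1/5$ is attained at $k=1$.
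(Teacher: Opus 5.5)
Your proof is correct, but it follows a different and more explicit route than the paper. The paper proves the lemma in one line, as a special case of the general derivative characterization \eqref{eq:derivative-characterization} (here with $p=2$, $m=2$, $\gamma=0$). That characterization only gives a norm equivalence with unspecified constants. It therefore does not by itself justify the particular constants $1/24$ and $5$, which are later used quantitatively in Theorem~\ref{thm:DA4-exact-criterion} through the factors $\sqrt{24}$ and $\sqrt5$.

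Your computation reduces everything to the weight ratio $\rho(k)=24\prod_{j=1}^4 k/(k+j)$. It is self-contained and actually produces those constants. It also shows they are optimal: $\rho(1)=1/5$ gives equality in the right-hand inequality for $h=z_1$, and $\rho(k)\uparrow 24$ as $k\to\infty$ shows $1/24$ cannot be improved.

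The paper's route gains only brevity and places the lemma inside the general Bergman--Besov framework valid for all $n,p,\alpha$. For the inequality as stated, with explicit constants, your argument is the one actually required. Your plan to justify termwise integration by integrating over $\{|z|<r\}$ and letting $r\to1$ by monotone convergence is the right way to handle arbitrary holomorphic $h$, with both sides possibly infinite.
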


\begin{proof}This is the special case of \eqref{eq:derivative-characterization}.
\end{proof}

The following theorem gives an exact criterion for the four-dimensional reciprocal problem: it reduces solvability to the finiteness of the quantity $J_R(f)$.
\begin{theorem}
\label{thm:DA4-exact-criterion}
Let $f\in H_4^2$ and
\[
 \inf_{z\in\mathbb B_4}|f(z)|\ge c>0.
\]
Define
\begin{equation}
 J_R(f):=\int_{\mathbb B_4}\frac{|Rf(z)|^4}{|f(z)|^6}\,\dd v(z)
 =\left\|\frac{(Rf)^2}{f^3}\right\|_{A^2(\mathbb B_4)}^2.
\end{equation}
Then the following conditions are equivalent:
\[
 \frac1f\in H_4^2,
 \qquad
 J_R(f)<\infty.
\]
Let $u=1/f$. Whenever either of the above conditions holds, the quantitative estimate
\begin{equation}
 \|u\|_{H_4^2}
 \le c^{-1}+\sqrt{120}\,c^{-2}\|f\|_{H_4^2}
      +2\sqrt5\,J_R(f)^{1/2},
\end{equation}
and the reverse estimate
\begin{equation}
 J_R(f)^{1/2}
 \le\sqrt6\left(\left\|\frac1f\right\|_{H_4^2}
                   +c^{-2}\|f\|_{H_4^2}\right)
\end{equation}
hold.
\end{theorem}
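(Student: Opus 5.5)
The plan is to reduce everything to Lemma~\ref{lem:DA4-radial-norm} together with the second-order formula \eqref{eq:second-reciprocal-derivative}. With $u=1/f$ we have
\[
R^2u=-\frac{R^2f}{f^2}+\frac{2(Rf)^2}{f^3}.
\]
The first term is harmless: since $|f|\ge c$, it satisfies $\|R^2f/f^2\|_{A^2(\mathbb B_4)}\le c^{-2}\|R^2f\|_{A^2(\mathbb B_4)}$. By the left inequality of Lemma~\ref{lem:DA4-radial-norm} applied to $f$, this is at most $\sqrt{24}\,c^{-2}\|f\|_{H_4^2}$. In particular $R^2f/f^2\in A^2(\mathbb B_4)$ unconditionally. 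Hence $R^2u\in A^2(\mathbb B_4)$ if and only if $(Rf)^2/f^3\in A^2(\mathbb B_4)$, i.e.\ if and only if $J_R(f)<\infty$. Applying Lemma~\ref{lem:DA4-radial-norm} to $u$ (in both directions) turns this into $u\in H_4^2\iff J_R(f)<\infty$, which is the equivalence.

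For the forward estimate I will use the right inequality of Lemma~\ref{lem:DA4-radial-norm} for $u$. This gives $\|u\|_{H_4^2}\le |u(0)|+\sqrt5\,\|R^2u\|_{A^2}$, using $\sqrt{a^2+b^2}\le a+b$. Here $|u(0)|\le c^{-1}$. By the triangle inequality,
\[
\|R^2u\|_{A^2}\le c^{-2}\|R^2f\|_{A^2}+2J_R(f)^{1/2}\le \sqrt{24}\,c^{-2}\|f\|_{H_4^2}+2J_R(f)^{1/2}.
\]
Multiplying by $\sqrt5$ gives $\sqrt{120}\,c^{-2}\|f\|_{H_4^2}+2\sqrt5\,J_R(f)^{1/2}$, exactly the stated constants.

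For the reverse estimate I will isolate
\[
2\frac{(Rf)^2}{f^3}=R^2u+\frac{R^2f}{f^2},
\]
so that $2J_R(f)^{1/2}\le \|R^2u\|_{A^2}+c^{-2}\|R^2f\|_{A^2}$. The left inequality of Lemma~\ref{lem:DA4-radial-norm} gives $\|R^2h\|_{A^2}\le\sqrt{24}\,\|h\|_{H_4^2}$ for both $h=u$ and $h=f$. This yields
\[
J_R(f)^{1/2}\le \tfrac{\sqrt{24}}2\bigl(\|u\|_{H_4^2}+c^{-2}\|f\|_{H_4^2}\bigr)=\sqrt6\bigl(\|u\|_{H_4^2}+c^{-2}\|f\|_{H_4^2}\bigr).
\]

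The only genuine obstacle is the validity of Lemma~\ref{lem:DA4-radial-norm} with these explicit constants. The paper cites it as a special case of \eqref{eq:derivative-characterization}, but the constants should be checked monomially. For $|\nu|=k\ge1$, the ratio of the $\|R^2z^\nu\|_{A^2}^2$ coefficient to the $H_4^2$ coefficient is
\[
\frac{24k^4\,\nu!/(k+4)!}{\nu!/k!}=\frac{24k^4}{(k+1)(k+2)(k+3)(k+4)}.
\]
This ratio lies in $[24/120,\,24)=[1/5,24)$ and tends to $24$ as $k\to\infty$. So orthogonality of monomials in both norms gives $\tfrac1{24}\|R^2h\|^2\le\|h\|^2-|h(0)|^2\le5\|R^2h\|^2$, and the remaining steps are just the triangle inequality.
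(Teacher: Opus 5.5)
Your proposal is correct and follows the paper's proof. Both use the second-order formula for $R^2(1/f)$, the bound $\|R^2f/f^2\|_{A^2}\le\sqrt{24}\,c^{-2}\|f\|_{H_4^2}$ from the left inequality of Lemma~\ref{lem:DA4-radial-norm}, the right inequality together with $|u(0)|\le c^{-1}$ for the forward estimate, and the identity $2(Rf)^2/f^3=R^2u+R^2f/f^2$ for the reverse estimate. Your monomial check, showing that $24k^4/((k+1)(k+2)(k+3)(k+4))$ lies in $[1/5,24)$, is a useful addition: the paper justifies the explicit constants $1/24$ and $5$ only by appeal to the general derivative characterization, which does not by itself produce them.
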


\begin{proof}
Since $|f|\ge c$, $u=1/f$ is holomorphic on $\mathbb B_4$, and direct differentiation gives
\begin{equation}
 R^2u=-\frac{R^2f}{f^2}+2\frac{(Rf)^2}{f^3}.
\end{equation}
By the left-hand inequality in Lemma~\ref{lem:DA4-radial-norm},
\[
 \|R^2f\|_{A^2(\mathbb B_4)}
 \le\sqrt{24}\,\|f\|_{H_4^2}.
\]
Therefore, if $J_R(f)<\infty$, then
\[
 \|R^2u\|_{A^2(\mathbb B_4)}
 \le \sqrt{24}\,c^{-2}\|f\|_{H_4^2}+2J_R(f)^{1/2}<\infty.
\]
Using the right-hand inequality in Lemma~\ref{lem:DA4-radial-norm} and $|u(0)|\le c^{-1}$,
\[
 \|u\|_{H_4^2}
 \le c^{-1}+\sqrt5\,\|R^2u\|_{A^2(\mathbb B_4)},
\]
so $J_R(f)<\infty$ implies $u\in H_4^2$.

Conversely, if $u=\frac{1}{f}\in H_4^2$, then Lemma~\ref{lem:DA4-radial-norm} gives $R^2u\in A^2(\mathbb B_4)$. Since
\[
 2\frac{(Rf)^2}{f^3}=R^2u+\frac{R^2f}{f^2},
\]
the right-hand side belongs to $A^2(\mathbb B_4)$, and hence $J_R(f)<\infty$. Moreover,
\[
 2J_R(f)^{1/2}
 \le \|R^2u\|_{A^2(\mathbb B_4)}
      +c^{-2}\|R^2f\|_{A^2(\mathbb B_4)}
 \le\sqrt{24}\left(\|u\|_{H_4^2}+c^{-2}\|f\|_{H_4^2}\right),
\]
which proves the theorem.
\end{proof}

\section{Some Conditional Results}

\subsection{The Bounded Holomorphic Hypothesis}

We impose the following boundedness and lower-bound condition:
\begin{equation}
 f\in A_\alpha^p(\Bn)\cap H^\infty(\Bn),\qquad
 \norm{f}_\infty\leq M,\qquad
 \inf_{\Bn}|f|\geq c>0.
\label{eq:bounded-holomorphic-hypothesis}
\end{equation}
Let $s=-(\alpha+1)/p$. The space $A_\alpha^p$ is equivalent to the corresponding analytic Besov space of smoothness $s$. The function $F(\zeta)=1/\zeta$ is holomorphic in a neighborhood of the range of $f$, and
\begin{equation}
|F^{(j)}(\zeta)|\leq j!c^{-j-1}\qquad (|\zeta|\geq c).
\label{eq:reciprocal-derivative-bound}
\end{equation}
For $p\geq1$, the Banach-range analytic Besov composition theorem gives
\begin{equation}
\norm{F\circ f}_{A_\alpha^p}
\leq C_{\alpha,p,n,F,K}
\Psi\!\left(1+\norm{f}_{A_\alpha^p}\right),
\label{eq:composition-norm-estimate}
\end{equation}
where $K$ contains the range of $f$, and $\Psi$ depends on the choice of composition theorem. Since $F\circ f=1/f$, this proves the following result.

\begin{theorem}\label{thm:bounded-holomorphic-reciprocal}
Let $p\geq1$ and suppose that the Banach-range analytic Besov composition theorem applies at the smoothness $s=-(\alpha+1)/p$. Then every $f$ satisfying \eqref{eq:bounded-holomorphic-hypothesis} obeys $1/f\in A_\alpha^p(\Bn)$ and
\begin{equation}
\norm{1/f}_{A_\alpha^p}
\leq C_{\alpha,p,n,F,K}
\Psi\!\left(1+\norm{f}_{A_\alpha^p}\right).
\label{eq:reciprocal-norm-estimate}
\end{equation}
\end{theorem}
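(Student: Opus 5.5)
The plan is to reduce the statement to the composition estimate \eqref{eq:composition-norm-estimate}, which the hypothesis of the theorem lets us assume at smoothness $s=-(\alpha+1)/p$. Everything else is bookkeeping of the identification between $A_\alpha^p$ and $\mathcal B_s^p$ and of the range of $f$.

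\textbf{Step 1: the function space.} By Lemma~\ref{lem:besov-identification} and the norm equivalence \eqref{eq:besov-norm-equivalence}, $f\in A_\alpha^p(\Bn)$ means $f\in\mathcal B_s^p(\Bn)$, and $\norm{f}_{A_\alpha^p}\asymp\norm{f}_{\mathcal B_s^p}$. The constants depend only on $n,p,\alpha$ and the admissible integer $m$ with $pm+\alpha>-1$.

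\textbf{Step 2: the compact set $K$.} From \eqref{eq:bounded-holomorphic-hypothesis}, $f(\Bn)$ lies in the closed annulus
\[
K=K_{c,M}=\{\zeta\in\C:c\le|\zeta|\le M\}.
\]
The function $F(\zeta)=1/\zeta$ is holomorphic on the open annulus $\{c/2<|\zeta|<2M\}$, which is a neighborhood of $K$. Its derivatives obey \eqref{eq:reciprocal-derivative-bound} uniformly there, with $c$ replaced by $c/2$. Thus the constant in the composition theorem depends only on $c$ and $M$ (together with $n,p,\alpha$), not on the particular $f$.

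\textbf{Step 3: apply the composition theorem.} The assumed Banach-range composition theorem \eqref{eq:besov-composition}--\eqref{eq:besov-composition-estimate} at smoothness $s$ gives $F\circ f=1/f\in\mathcal B_s^p$. It also gives
\[
\norm{1/f}_{\mathcal B_s^p}\le C\,\Psi\!\left(1+\norm{f}_{\mathcal B_s^p}\right).
\]
Transferring back through Step 1 gives \eqref{eq:reciprocal-norm-estimate}. Because $\Psi$ is monotone, the comparability constant between the two norms can be absorbed: replace $\Psi(t)$ by $\Psi(C't)$, or enlarge $C$.

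The step needing the most care is the \emph{hypothesis} of the composition theorem when $s\le n/p$. Theorem~\ref{thm:besov-composition} was stated only for $s>n/p$, where boundedness of $f$ comes for free. Here boundedness is imposed instead through $\norm{f}_\infty\le M$, and $\Psi$ may depend on $M$. I would check that the statement being assumed is the version valid for $\mathcal B_s^p\cap H^\infty$: by Bony's decomposition \eqref{eq:bony-decomposition}, the paraproducts need only $L^\infty$ bounds. If the assumed theorem needs $s>0$, the case $s\le0$ (that is, $\alpha\ge-1$) should be handled separately by Theorem~\ref{thm:subcritical-reciprocal}, with the estimate $\norm{1/f}\le c^{-1}v_\alpha(\Bn)^{1/p}$ when $\alpha>-1$.
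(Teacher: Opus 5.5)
Your proposal is correct and takes the same route as the paper: confine $f(\Bn)$ to the annulus $K_{c,M}$, apply the assumed composition theorem to $F(\zeta)=1/\zeta$, and pass back to $A_\alpha^p$ through the norm equivalence with $\mathcal B_s^p$. Your additions — the explicit neighborhood of $K$, absorbing the comparability constants into $\Psi$, and the fallback to Theorem~\ref{thm:subcritical-reciprocal} when $s\le 0$ — are harmless and spell out what the paper leaves implicit.
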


\begin{proof}
The range of $f$ is contained in the compact annulus
$K_{c,M}=\{\zeta:c\leq|\zeta|\leq M\}$. Applying the assumed analytic Besov composition theorem to $F(\zeta)=1/\zeta$ gives \eqref{eq:composition-norm-estimate}; the norm equivalence between $A_\alpha^p$ and the corresponding analytic Besov space then gives \eqref{eq:reciprocal-norm-estimate}. In particular, $1/f=F\circ f$ belongs to $A_\alpha^p$.
\end{proof}
The constant in \eqref{eq:reciprocal-norm-estimate} depends on $M=\|f\|_\infty$; when $\alpha<-n-p-1$, Theorem~\ref{thm:affine-obstruction} shows it cannot depend only on $c$.

\subsection{A Derivative Criterion}

For any $0<p<\infty$, one may impose the additional conditions
\begin{equation}
\norm{R^jf}_\infty\leq M_j,\qquad 1\leq j<m.
\label{eq:lower-derivative-bounds}
\end{equation}
Since $\mu_\gamma(\Bn)<\infty$, every lower-order product in \eqref{eq:bell-expansion} belongs to $L^p(d\mu_\gamma)$. When $p\geq1$, the triangle inequality gives
\begin{align}
\norm{R^m(1/f)}_{L^p(d\mu_\gamma)}
&\leq c^{-2}\norm{R^mf}_{L^p(d\mu_\gamma)} \notag\\
&\quad+\mu_\gamma(\Bn)^{1/p}
\sum_{r=2}^{m}c^{-(r+1)}
\sum_{\ell_1+\cdots+\ell_r=m}
   \left|C_{\ell_1,\ldots,\ell_r}\right|
   \prod_{j=1}^{r}M_{\ell_j}.
\label{eq:bell-product-estimate}
\end{align}
When $0<p<1$, the same argument is applied to the $p$-th powers, using $\|\sum h_j\|_p^p\leq\sum\|h_j\|_p^p$. More precisely,
\begin{align}
\norm{R^m(1/f)}_{L^p(d\mu_\gamma)}^p
&\leq c^{-2p}\norm{R^mf}_{L^p(d\mu_\gamma)}^p \notag\\
&\quad+\mu_\gamma(\Bn)
\sum_{r=2}^{m}c^{-p(r+1)}
\sum_{\ell_1+\cdots+\ell_r=m}
\left|C_{\ell_1,\ldots,\ell_r}\right|^p
\prod_{j=1}^{r}M_{\ell_j}^p.
\label{eq:bell-product-quasinorm-estimate}
\end{align}
This is a quasi-norm estimate, rather than a linear triangle inequality in the usual sense.
\begin{proposition}
\label{prop:bounded-derivatives}
Let $f\in A_\alpha^p(\Bn)$ satisfy $\inf_{\Bn}|f|\geq c>0$, and let $m$ be an integer satisfying $pm+\alpha>-1$. If \eqref{eq:lower-derivative-bounds} holds, then $1/f\in A_\alpha^p(\Bn)$. In addition, when $p\geq1$, inequality \eqref{eq:bell-product-estimate} holds; when $0<p<1$, the corresponding estimate holds after taking the $p$-th powers.
\end{proposition}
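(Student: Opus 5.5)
The plan is to expand $R^m(1/f)$ with Lemma~\ref{lem:bell} and estimate it term by term in $L^p(d\mu_\gamma)$, where $\gamma=\alpha+pm>-1$. By \eqref{eq:derivative-characterization}, membership of $1/f$ in $A_\alpha^p$ is equivalent to $R^m(1/f)\in L^p(d\mu_\gamma)$ (the constant term poses no issue), so it suffices to bound this single function. First observe that $1/f$ is holomorphic on $\Bn$, since $|f|\ge c>0$. Because $\gamma>-1$, the measure $\mu_\gamma$ is finite, so every bounded function lies in $L^p(d\mu_\gamma)$.

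Next split the expansion \eqref{eq:bell-expansion} according to the number $r$ of factors. The term $r=1$ has only the partition $\ell_1=m$, and its coefficient is $-1$. This follows from the induction in Lemma~\ref{lem:bell}: the $r=1$ coefficient is only ever hit by $R$ acting on the $R^{\ell}f$ factor, starting from $-1$ when $m=1$. So this term is $-R^mf/f^2$, which is pointwise bounded by $c^{-2}|R^mf|$. Its $L^p(d\mu_\gamma)$ norm is therefore at most $c^{-2}\norm{R^mf}_{L^p(d\mu_\gamma)}$, which is finite by \eqref{eq:derivative-characterization} applied to $f$.

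For $r\ge2$, every part satisfies $\ell_j\le m-1$, because at least two positive parts sum to $m$. Hypothesis \eqref{eq:lower-derivative-bounds} then gives $|R^{\ell_j}f|\le M_{\ell_j}$, and $|f^{-(r+1)}|\le c^{-(r+1)}$. Hence each such term is bounded pointwise by the constant $c^{-(r+1)}|C_{\ell_1,\ldots,\ell_r}|\prod_j M_{\ell_j}$. Its $L^p(d\mu_\gamma)$ norm is at most this constant times $\mu_\gamma(\Bn)^{1/p}$.

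To finish, sum the finitely many terms. For $p\ge1$, Minkowski's inequality gives \eqref{eq:bell-product-estimate} directly. For $0<p<1$, use the $p$-triangle inequality $\norm{\sum h_k}_p^p\le\sum\norm{h_k}_p^p$, which holds because $|a+b|^p\le|a|^p+|b|^p$ pointwise. Applying it term by term yields \eqref{eq:bell-product-quasinorm-estimate}. In either case $R^m(1/f)\in L^p(d\mu_\gamma)$, so $1/f\in A_\alpha^p(\Bn)$.

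There is no serious obstacle here. The only points needing care are the finiteness of $\mu_\gamma$, which follows from $pm+\alpha>-1$, and the identification of the unique $r=1$ term with coefficient $-1$.
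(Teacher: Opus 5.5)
Your proposal is correct and follows the paper's proof essentially step for step. You expand $R^m(1/f)$ via Lemma~\ref{lem:bell}, bound the $r=1$ term by $c^{-2}|R^mf|$, bound each $r\ge2$ term by a constant using \eqref{eq:lower-derivative-bounds} and the finiteness of $\mu_\gamma$, and then sum with Minkowski's inequality or its $p$-additive substitute; your explicit checks that the unique $r=1$ coefficient is $-1$ and that every part satisfies $\ell_j\le m-1$ when $r\ge2$ fill in details the paper leaves implicit.
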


\begin{proof}
By Lemma~\ref{lem:bell}, every term with $r\geq2$ is a product of bounded lower-order derivatives; since $\mu_\gamma(\Bn)<\infty$, all such terms belong to $L^p(d\mu_\gamma)$. The $r=1$ term is controlled by $c^{-2}\|R^mf\|_{L^p}$. Summing finitely many terms gives \eqref{eq:bell-product-estimate} when $p\geq1$; when $0<p<1$, use $\|\sum h_j\|_p^p\leq\sum\|h_j\|_p^p$. The derivative characterization \eqref{eq:derivative-characterization} proves the conclusion.
\end{proof}

When $p\geq1$, \eqref{eq:lower-derivative-bounds} gives a linear estimate in terms of the highest-order derivative norm; when $0<p<1$, it gives the quasi-norm estimate \eqref{eq:bell-product-quasinorm-estimate}. Equivalently, it suffices that every Bell-polynomial term satisfy
\begin{equation}
\norm{R^{\ell_1}f\cdots R^{\ell_r}f}_{L^p(d\mu_\gamma)}
\leq A_{\ell_1,\ldots,\ell_r}
\left(1+\norm{R^mf}_{L^p(d\mu_\gamma)}\right)
\label{eq:bell-term-condition}
\end{equation}
for all $r\geq2$ and all decompositions
\[
\ell_1+\cdots+\ell_r=m,\qquad \ell_j\geq1.
\]
If this condition holds, then
\begin{proposition}\label{prop:bell-products}
Let $f\in A_\alpha^p(\Bn)$ satisfy $\inf_{\Bn}|f|\geq c>0$, and let $m$ be an integer satisfying $pm+\alpha>-1$. Assume that \eqref{eq:bell-term-condition} holds. Then $1/f\in A_\alpha^p(\Bn)$, and $R^m(1/f)$ satisfies a finite estimate whose constant depends only on $c$, the constants $A_{\ell_1,\ldots,\ell_r}$, and $\|R^mf\|_{L^p(d\mu_\gamma)}$.
\end{proposition}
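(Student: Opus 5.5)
The plan is to use the Bell-type expansion of Lemma~\ref{lem:bell} and bound each term separately in $L^p(d\mu_\gamma)$. Since $f$ is zero-free (indeed $|f|\geq c$), Lemma~\ref{lem:bell} applies and gives
\[
R^m(1/f)=\sum_{r=1}^m f^{-(r+1)}\sum_{\ell_1+\cdots+\ell_r=m}C_{\ell_1,\ldots,\ell_r}R^{\ell_1}f\cdots R^{\ell_r}f .
\]
The lower bound $|f|\geq c$ yields the pointwise bound $|f^{-(r+1)}|\leq c^{-(r+1)}$.

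\textbf{The term $r=1$.} Here the only partition is $\ell_1=m$, and the coefficient is $C_m=-1$. This term is therefore bounded by $c^{-2}\|R^mf\|_{L^p(d\mu_\gamma)}$, exactly as in the proof of Proposition~\ref{prop:bounded-derivatives}. It is finite because $f\in A_\alpha^p$ and \eqref{eq:derivative-characterization} holds.

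\textbf{The terms $r\geq2$.} For each of these I apply hypothesis \eqref{eq:bell-term-condition} directly:
\[
\norm{f^{-(r+1)}R^{\ell_1}f\cdots R^{\ell_r}f}_{L^p(d\mu_\gamma)}\leq c^{-(r+1)}A_{\ell_1,\ldots,\ell_r}\left(1+\norm{R^mf}_{L^p(d\mu_\gamma)}\right).
\]
There are only finitely many ordered partitions of $m$. For $p\geq1$, the triangle inequality then gives
\[
\norm{R^m(1/f)}_{L^p(d\mu_\gamma)}\leq c^{-2}\norm{R^mf}_{L^p(d\mu_\gamma)}+\sum_{r=2}^m c^{-(r+1)}\sum_{\ell_1+\cdots+\ell_r=m}|C_{\ell_1,\ldots,\ell_r}|A_{\ell_1,\ldots,\ell_r}\left(1+\norm{R^mf}_{L^p(d\mu_\gamma)}\right).
\]
For $0<p<1$, I instead sum $p$-th powers using $\|\sum h_j\|_p^p\leq\sum\|h_j\|_p^p$, as in \eqref{eq:bell-product-quasinorm-estimate}. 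The coefficients $C_{\ell_1,\ldots,\ell_r}$ are fixed integers depending only on $m$. Since $m$ is determined by $p$ and $\alpha$, the resulting constant depends only on $c$, the $A$'s, and $\|R^mf\|_{L^p(d\mu_\gamma)}$, as the statement claims.

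\textbf{Conclusion and main point to check.} Finally, \eqref{eq:derivative-characterization} converts $R^m(1/f)\in L^p(d\mu_\gamma)$ into $1/f\in A_\alpha^p(\Bn)$; the constant term $1/f(0)$ is harmless since $|1/f(0)|\leq c^{-1}$. There is no genuine obstacle, because the hypothesis is tailored to the expansion. The one point needing care is checking that the derivative characterization is applied with the same admissible $m$ throughout, so that $\gamma=\alpha+pm>-1$ and the hypothesis and the characterization refer to the same weighted space.
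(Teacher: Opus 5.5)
Your proposal is correct and is essentially the paper's proof: expand $R^m(1/f)$ via Lemma~\ref{lem:bell} and bound the $r=1$ term by $c^{-2}\|R^mf\|_{L^p(d\mu_\gamma)}$. You then bound the $r\geq2$ terms by $c^{-(r+1)}A_{\ell_1,\ldots,\ell_r}(1+\|R^mf\|_{L^p(d\mu_\gamma)})$, sum with the triangle inequality (or its $p$-additive substitute when $0<p<1$), and conclude with \eqref{eq:derivative-characterization}. One small correction: $m$ is not determined by $p$ and $\alpha$, since any admissible integer works, so the constant also depends on the fixed choice of $m$ through the coefficients $C_{\ell_1,\ldots,\ell_r}$ (and on $p$); the paper's statement likewise leaves this implicit.
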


\begin{proof}
Substitute \eqref{eq:bell-term-condition} into the finite expansion \eqref{eq:bell-expansion}, and use $|f|^{-(r+1)}\leq c^{-(r+1)}$; for $p\geq1$, apply the triangle inequality, and for $0<p<1$, use its $p$-additive substitute. The $r=1$ term is controlled by $c^{-2}\|R^mf\|_{L^p}$. Hence $R^m(1/f)\in L^p(d\mu_\gamma)$, and \eqref{eq:derivative-characterization} gives the desired conclusion.
\end{proof}

The condition in \eqref{eq:bell-term-condition} gives a direct term-by-term sufficient criterion for the Bell expansion. It is not necessary in general, since different terms in the reciprocal formula may cancel each other.

\subsection{The Multiplier Condition}

We now consider the multiplier hypothesis
\[
f\in\operatorname{Mult}(A_\alpha^p),\qquad \inf_{\Bn}|f|>0.
\]
Since
\begin{equation}
\sigma(M_f)=\overline{f(\Bn)},
\label{eq:spectral-identity}
\end{equation}
(see \cite{CaoHeZhu2018}), it follows that when $0\notin\sigma(M_f)$, $M_f^{-1}=M_{1/f}$. Therefore,
\[
\frac1f\in\operatorname{Mult}(A_\alpha^p)\subset A_\alpha^p.
\]
\begin{theorem}\label{thm:multiplier-inverse}
Assume that $f\in\operatorname{Mult}(A_\alpha^p)$, $\inf_{\Bn}|f|>0$, and the spectral identity \eqref{eq:spectral-identity} holds. Then $1/f\in\operatorname{Mult}(A_\alpha^p)$.
\end{theorem}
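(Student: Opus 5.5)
The plan is a standard Banach-algebra argument applied to the operator $M_f$ acting on $A_\alpha^p$.

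First, record that $M_f$ is bounded on $A_\alpha^p$, because $f\in\operatorname{Mult}(A_\alpha^p)$. By the spectral identity \eqref{eq:spectral-identity} and the hypothesis $\inf_{\Bn}|f|=\delta>0$, every point of $\overline{f(\Bn)}$ has modulus at least $\delta$. Hence $0\notin\sigma(M_f)$, and $M_f$ is invertible in the algebra of bounded operators on $A_\alpha^p$. (For $0<p<1$ the space is a quasi-Banach space. The open mapping theorem and the bounded inverse still hold there, so the argument is unchanged; for $p\ge1$ nothing extra is needed.)

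Second, identify the inverse $T:=M_f^{-1}$ with multiplication by $1/f$. Let $g\in A_\alpha^p$ and put $h=Tg$. Then $fh=M_fh=g$ pointwise on $\Bn$. Since $f$ has no zeros, this forces $h=g/f$. So $T g=g/f$ for every $g\in A_\alpha^p$, which says that $1/f$ maps $A_\alpha^p$ into itself, with multiplier norm $\|M_{1/f}\|=\|T\|<\infty$. That is exactly the statement $1/f\in\operatorname{Mult}(A_\alpha^p)$.

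Third, as a side remark, when $A_\alpha^p$ contains the constants (for instance when $\alpha>-1$, or in general after the standard normalization), applying $T$ to $g\equiv1$ gives $1/f\in A_\alpha^p$. This recovers the inclusion $\operatorname{Mult}(A_\alpha^p)\subset A_\alpha^p$ used before the theorem.

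The main obstacle is the first step. The whole content sits in the assumed spectral identity, and in particular in the inclusion $\sigma(M_f)\subset\overline{f(\Bn)}$, which is exactly what gives invertibility. Without it, the proof would have to show directly that $M_{1/f}$ is bounded, and that is essentially as hard as the reciprocal problem itself. Since the theorem assumes \eqref{eq:spectral-identity}, I will take it as given (citing \cite{CaoHeZhu2018}). I will only check the point-evaluation step: evaluations are continuous on $A_\alpha^p$, so the identity $fh=g$ holds pointwise, which justifies identifying the abstract inverse with multiplication by $1/f$.
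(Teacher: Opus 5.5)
Your proposal is correct and follows the paper's proof: the spectral identity rules out $0\in\sigma(M_f)$, which gives a bounded inverse $T$, and the pointwise identity $f\,Th=h$ identifies $T$ with $M_{1/f}$. Your appeal to the open mapping theorem for $0<p<1$ is harmless but unnecessary, since $0\notin\sigma(M_f)$ already means by definition that $M_f$ has a bounded inverse.
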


\begin{proof}
The lower bound implies $0\notin\overline{f(\Bn)}=\sigma(M_f)$, so $M_f$ has a bounded inverse operator $T$. For every $h\in A_\alpha^p$, we have $f\,Th=h$. Hence
\[
(Th)(z)=\frac{h(z)}{f(z)},\qquad z\in\Bn.
\]
Thus $T=M_{1/f}$; since $T=M_f^{-1}$ is bounded, $M_{1/f}$ is bounded as well.
\end{proof}

This hypothesis is stronger than membership in $A_\alpha^p\cap H^\infty$, but it is stable under inversion whenever the spectrum avoids zero.

\subsection{$A_\alpha^p\cap H^\infty$ Does Not Imply a Bounded Multiplier}

\begin{proposition}
\label{prop:positive-multiplier}
If $\alpha>-1$ and $0<p<\infty$, then
\[
f\in H^\infty\Longrightarrow f\in\operatorname{Mult}(A_\alpha^p),
\]
and
\begin{equation}
\norm{fg}_{A_\alpha^p}\lesssim\norm{f}_\infty\norm{g}_{A_\alpha^p}.
\label{eq:multiplier-estimate}
\end{equation}
\end{proposition}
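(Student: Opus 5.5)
The claim for $\alpha>-1$ is immediate from the definition, since then $A_\alpha^p(\Bn)$ is the honest weighted Bergman space $L^p(\Bn,dv_\alpha)\cap H(\Bn)$ with the finite measure $dv_\alpha=(1-|z|^2)^\alpha\dd v$. There is no need to use the derivative characterization \eqref{eq:derivative-characterization} here. For $\alpha>-1$ we may take $m=0$ in that characterization, so $\gamma=\alpha$ and $\mu_\gamma=v_\alpha$, and the norm of $A_\alpha^p$ is (equivalent to) $\left(\int_{\Bn}|g|^p\,dv_\alpha\right)^{1/p}$.

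The argument is a pointwise estimate followed by integration. Let $f\in H^\infty$ and $g\in A_\alpha^p$. The product $fg$ is holomorphic on $\Bn$, and for every $z\in\Bn$,
\[
|f(z)g(z)|^p\le \norm{f}_\infty^p\,|g(z)|^p .
\]
Integrating against $dv_\alpha$ gives
\[
\int_{\Bn}|fg|^p\,dv_\alpha\le \norm{f}_\infty^p\int_{\Bn}|g|^p\,dv_\alpha .
\]
Taking $p$-th roots yields \eqref{eq:multiplier-estimate} with constant $1$ if the norm is normalized as the integral, and with the constant of the norm equivalence otherwise. This works for every $0<p<\infty$, because the argument uses neither the triangle inequality nor convexity, so the quasi-Banach range $0<p<1$ is handled identically. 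Since $fg\in A_\alpha^p$ for every $g$, the operator $M_f$ maps $A_\alpha^p$ into itself, and by the estimate it is bounded with $\norm{M_f}\lesssim\norm{f}_\infty$, that is, $f\in\operatorname{Mult}(A_\alpha^p)$.

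There is essentially no obstacle. The one point to flag is to avoid the derivative form $R^m(fg)$ for $m\ge1$: the Leibniz terms $R^jf\,R^{m-j}g$ would not be controlled by $\norm{f}_\infty$ alone. This is exactly why the statement is restricted to $\alpha>-1$, where the $m=0$ description is available.
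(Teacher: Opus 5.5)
Your proof is correct and is essentially the paper's argument: for $\alpha>-1$ the space carries the weighted $L^p(dv_\alpha)$ norm, and integrating the pointwise bound $|fg|\le\|f\|_\infty|g|$ gives the multiplier estimate for every $0<p<\infty$. Your extra remarks (taking $m=0$ in the derivative characterization, and why the Leibniz expansion of $R^m(fg)$ should be avoided) are accurate but not needed.
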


\begin{proof}
When $\alpha>-1$, this space is defined by a weighted $L^p$ norm. Integrating the pointwise estimate $|fg|\leq\|f\|_\infty|g|$ gives \eqref{eq:multiplier-estimate}.
\end{proof}

\begin{proposition}
\label{prop:dirichlet-nonmultiplier}
The inclusion
\[
\operatorname{Mult}(A_{-2}^2(\D))
\subset A_{-2}^2(\D)\cap H^\infty(\D)
\]
is strict. Therefore, when $\alpha\leq-1$, membership in $A_\alpha^p\cap H^\infty$ generally does not imply that $M_f$ is bounded.
\end{proposition}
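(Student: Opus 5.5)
We will show that $\operatorname{Mult}(A_{-2}^2(\D))$ is a proper subset of $A_{-2}^2(\D)\cap H^\infty(\D)$ by exhibiting an explicit $f\in A_{-2}^2(\D)\cap H^\infty(\D)$ that is not a multiplier.

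\emph{The inclusion.} By Lemma~\ref{lem:besov-identification}, $A_{-2}^2(\D)$ has smoothness $s=1/2$. Taking $m=1$, so that $\gamma=0$, its norm is $|f(0)|^2+\int_\D|f'|^2\,dv$, i.e.\ $A_{-2}^2(\D)$ is the Dirichlet space $\mathcal D$. Since $1\in\mathcal D$, every multiplier satisfies $f=M_f1\in\mathcal D$. Multipliers are also bounded: the standard reproducing-kernel argument gives $M_f^*K_w=\overline{f(w)}K_w$, hence $|f(w)|\le\|M_f\|$ for all $w\in\D$. This proves $\operatorname{Mult}\subset\mathcal D\cap H^\infty$.

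\emph{Strictness.} We need $f\in\mathcal D\cap H^\infty$ with $M_f$ unbounded. We use the characterization of multipliers of $\mathcal D$ (Stegenga): $f$ is a multiplier if and only if $f\in H^\infty$ and $|f'|^2\,dA$ is a Carleson measure for $\mathcal D$. Such measures are characterized by logarithmic-capacity conditions; a simple necessary consequence is
\[
\int_{S(I)}|f'|^2\,dA\lesssim \Big(\log\frac{e}{|I|}\Big)^{-1}
\]
for every arc $I\subset\partial\D$, where $S(I)$ is the Carleson box over $I$.

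\emph{The example.} Fix the boundary point $1$. Let $\phi$ be a Riemann map from $\D$ onto a bounded simply connected domain $\Omega$, so that $f=\phi\in H^\infty$ and $\int_\D|f'|^2\,dA=\operatorname{Area}(\Omega)<\infty$, giving $f\in\mathcal D$. Choose $\Omega$ with a thin spiral or comb-like portion near $\phi(1)$ so that the area covered by the image of $S(I_\delta)$, with $|I_\delta|=\delta$ centered at $1$, decays more slowly than $1/\log(1/\delta)$. Then the Carleson condition above fails, so $M_\phi$ is unbounded. The same test can also be run directly: take $g_\delta$ a normalized Dirichlet test function concentrated near $1$, for instance $g_\delta(z)=\big(\log\tfrac{1}{\delta}\big)^{-1/2}\log\frac{1}{1-(1-\delta)z}$, and show that $\|\phi g_\delta\|_{\mathcal D}\to\infty$.

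Alternatively, one can avoid designing $\Omega$ and use a lacunary series $f(z)=\sum_k a_k z^{2^k}$. Here $f\in\mathcal D$ if and only if $\sum_k 2^k|a_k|^2<\infty$, and $f\in H^\infty$ if and only if $\sum_k|a_k|<\infty$. Choose $a_k=2^{-k/2}k^{-1}$ and test the multiplier norm against $g_N(z)=N^{-1/2}\sum_{j\le N}j^{-1/2}z^{2^j}$ or a similar normalized test function, and show that $\|fg_N\|_{\mathcal D}/\|g_N\|_{\mathcal D}$ is unbounded. However, this choice may well turn out to be a multiplier, so the conformal-map route is the safer option.

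\textbf{Main obstacle.} The hard step is quantitatively verifying the failure of the multiplier (Carleson) condition for the chosen function; the inclusion itself is routine.

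The final sentence of the proposition then follows: $A_{-2}^2(\D)$ is an instance of the case $\alpha\le-1$, so membership in $A_\alpha^p\cap H^\infty$ does not in general imply that $M_f$ is bounded.
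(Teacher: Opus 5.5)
Your outline matches the paper's: identify $A^2_{-2}(\D)$ with the Dirichlet space $\mathcal D$ (take $m=1$, $\gamma=0$) and use Stegenga's characterization. Your proof of the inclusion ($f=M_f1$ and $M_f^*K_w=\overline{f(w)}K_w$) is correct; the paper leaves it implicit. For strictness, the paper does not exhibit a non-multiplier. It simply takes as known, from Stegenga's work, that some $f\in H^\infty\cap\mathcal D$ has $|f'|^2\,dv$ not $\mathcal D$-Carleson.

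You instead commit to an explicit example, and that is where the gap is. The sentence ``choose $\Omega$ with a thin spiral or comb-like portion \dots\ so that the area \dots\ decays more slowly than $1/\log(1/\delta)$'' restates the claim rather than proving it. The geometry of $\Omega$ says nothing about Carleson boxes in $\D$ until you control, via harmonic measure, how short the boundary arcs are that carry the preimages of the large-area parts of $\Omega$. Also, boxes centered at the fixed point $1$ are the wrong test family; the bad boxes sit over those preimage arcs.

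Your fallback cannot work for any coefficients. If $f=\sum a_kz^{2^k}\in\mathcal D$, then $\sum|a_k|\le(\sum 2^k|a_k|^2)^{1/2}(\sum 2^{-k})^{1/2}<\infty$. Splitting $g$ into dyadic frequency blocks then gives $\|fg\|_{\mathcal D}\lesssim\|f\|_{\mathcal D}\|g\|_{\mathcal D}$, so every lacunary series in $\mathcal D$ is a multiplier.

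To close the gap, either cite the known existence result as the paper does, or finish the construction using only the easy half of Stegenga's theorem.

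\emph{Necessary condition.} If $M_\phi$ is bounded, then $\|g\phi'\|_{L^2(dA)}\le\|\phi g\|_{\mathcal D}+\|\phi\|_\infty\|g\|_{\mathcal D}$. Test with $g_I(z)=\log\frac{e}{1-\bar\zeta(1-|I|)z}$, where $\zeta$ is the center of $I$. Then $\|g_I\|_{\mathcal D}^2\asymp\log\frac{e}{|I|}$ and $|g_I|\gtrsim\log\frac{e}{|I|}$ on $S(I)$, which yields your condition $\int_{S(I)}|\phi'|^2\,dA\lesssim(\log\frac{e}{|I|})^{-1}$.

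\emph{The domain.} Let $\Omega$ be a square with disjoint square rooms $R_k$ of side $s_k=2^{-k}$ attached to one side through gaps of width $\epsilon_k=e^{-8^k}$. Let $\phi:\D\to\Omega$ be a Riemann map; since $\Omega$ is bounded with finite area, $\phi\in H^\infty\cap\mathcal D$.

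\emph{Short preimage arcs.} By Beurling's estimate, the harmonic measure at $\phi(0)$ of the inner walls of $R_k$ is at most a fixed power of $\epsilon_k/s_k$. So these walls correspond to an arc $J_k\subset\partial\D$ with $\log\frac{e}{|J_k|}\gtrsim 8^k$.

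\emph{Large area over those arcs.} Points of $R_k$ at distance $\ge s_k/4$ from the gap see those walls with harmonic measure $\ge1/2$. Hence their preimages lie in $\{z:\omega(z,J_k,\D)\ge 1/2\}$, where $\omega$ denotes harmonic measure. This is the region between $J_k$ and the geodesic joining its endpoints, and it lies in a box $S(\tilde J_k)$ with $|\tilde J_k|\le C|J_k|$. By univalence, $\int_{S(\tilde J_k)}|\phi'|^2\,dA\gtrsim s_k^2=4^{-k}$.

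\emph{Conclusion.} The necessary condition forces $\int_{S(\tilde J_k)}|\phi'|^2\,dA\lesssim 8^{-k}$, which contradicts the lower bound $4^{-k}$. So $M_\phi$ is unbounded.
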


\begin{proof}
Set
\[
n=1,\qquad p=2,\qquad \alpha=-2.
\]
Then
\[
A_{-2}^2(\D)=\mathcal D,
\]
the classical Dirichlet space, whose norm satisfies
\[
\norm{f}_{\mathcal D}^2
\asymp |f(0)|^2+\int_{\D}|f'(z)|^2\dd v(z).
\]
For $f,g\in\mathcal D$,
\[
(fg)'=f'g+fg'.
\]
Boundedness of $f$ controls $fg'$, but not $f'g$.

Stegenga's classical multiplier theorem states that $f\in\operatorname{Mult}(\mathcal D)$ if and only if $f\in H^\infty$ and
\[
d\nu_f(z)=|f'(z)|^2\dd v(z)
\]
is a Carleson measure for $\mathcal D$ \cite{Stegenga1980}. This theorem yields the following consequence: there exist functions in $H^\infty\cap\mathcal D$ for which $|f'|^2\,dv$ is not a Dirichlet--Carleson measure. Therefore,
\[
\operatorname{Mult}(\mathcal D)\subsetneq H^\infty\cap\mathcal D.
\]
Thus, there are bounded functions in $A_{-2}^2(\D)$ that are not bounded multipliers, proving the strict inclusion.
\end{proof}

\subsection{Bounded Holomorphic Functions and Nonlinear Uniform Estimates}

For a single function $f$, the inequality
\[
\norm{R^m(1/f)}_{L^p(d\mu_\gamma)}
\leq C_f\left(1+\norm{R^mf}_{L^p(d\mu_\gamma)}\right)
\]
has no substantive meaning when both sides are finite, because $C_f$ may grow without bound as $f$ varies. If there is a class of functions for which a uniform estimate holds, with $C$ independent of the particular $f$ in that class, then the reciprocal problem has an affirmative answer.

\begin{proposition}
\label{prop:hinfty-no-uniform}
If $\alpha<-n-p-1$, the conditions $f\in A_\alpha^p\cap H^\infty$ and $\inf|f|\geq c$ do not imply a uniform nonlinear estimate with a constant depending only on $c$.
\end{proposition}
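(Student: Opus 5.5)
The plan is to reuse the family $f_B(z)=1+B(1+z_1)$, $B>1$, from the proof of Theorem~\ref{thm:affine-obstruction}, and to observe that it already lies inside the more restrictive class $A_\alpha^p\cap H^\infty$ with $\inf|f_B|\geq 1$. Each $f_B$ is a polynomial, so $\|f_B\|_\infty\leq 1+2B<\infty$ and $f_B\in H^\infty$. Since $R^mf_B=Bz_1$ is bounded and $\mu_\gamma(\Bn)<\infty$ for $\gamma=\alpha+pm>-1$, the derivative characterization \eqref{eq:derivative-characterization} gives $f_B\in A_\alpha^p$. Moreover, $|f_B|\geq 1$ on $\Bn$, so every $f_B$ satisfies the hypotheses with $c=1$.

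To make ``uniform nonlinear estimate'' concrete, I interpret it as an inequality
\[
\norm{R^m(1/f)}_{L^p(d\mu_\gamma)}\leq \Psi_c\left(\norm{R^mf}_{L^p(d\mu_\gamma)}\right),
\]
where $\Psi_c:[0,\infty)\to[0,\infty)$ is a finite, nondecreasing function depending only on $c$. This is the natural analogue of \eqref{eq:besov-composition-estimate} with no dependence on $M$. The affine case \eqref{eq:affine-obstruction-estimate} is the instance $\Psi_c(t)=C(c)(1+t)$. Excluding that affine case follows verbatim from Theorem~\ref{thm:affine-obstruction}, since the ratio estimate there was proved for exactly this family.

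For a general $\Psi$, I would take the lower bound $\norm{R^m(1/f_B)}_{L^p(d\mu_\gamma)}\gtrsim B^{-(\alpha+n+1)/p}$ from the proof of Theorem~\ref{thm:affine-obstruction}. I would also recompute $\norm{R^mf_B}_{L^p(d\mu_\gamma)}=B\norm{z_1}_{L^p(d\mu_\gamma)}\asymp B$. Then I would reparametrize by $t=\norm{R^mf_B}\asymp B$, so the left side grows like $t^{\kappa}$ with $\kappa=-(\alpha+n+1)/p>1$ when $\alpha<-n-p-1$. This shows that any admissible $\Psi_1$ must satisfy $\Psi_1(t)\gtrsim t^{\kappa}$, i.e.\ it must grow superlinearly.

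The main obstacle is this last step. The family only forces $\Psi_1$ to be superlinear, so it rules out affine or linear control but not every finite $\Psi$. To get more, I would first try to sharpen the test family, for instance $f_{B,N}=1+B(1+z_1)^N$ or a rescaled variant $1+B(1+\delta+z_1)$ with $\delta\to0$ independently of $B$. The aim is to keep $\norm{R^mf}$ bounded while the concentration near $z_1=-1$ still makes $\norm{R^m(1/f)}$ blow up. If that fails, I would state the proposition in the affine/polynomial-growth sense that the paper's surrounding discussion uses. That version follows directly from the computation above, with the explicit lower bound $\Psi_1(t)\gtrsim t^{-(\alpha+n+1)/p}$.
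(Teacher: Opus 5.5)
Your treatment of the affine case is exactly the paper's proof. The paper just notes that the functions $f_B$ of Theorem~\ref{thm:affine-obstruction} are bounded with $|f_B|\geq 1$ and quotes that theorem. So the ``uniform nonlinear estimate'' it refutes is nothing more than \eqref{eq:affine-obstruction-estimate}, which is your fallback.

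You should not pursue the sharpening to an arbitrary finite nondecreasing $\Psi_c$. That version is false, so no family can achieve it. For $\alpha<-n-p-1$ (indeed for any $\alpha<-n-1$), at least when $p\geq1$, fix $1\le j\le m$ and put $g=R^jf$. Since $R^{m-j}g=R^mf$ and $g(0)=0$, we have $g\in A^p_{\alpha+pj}$ with norm $\asymp\norm{R^mf}_{L^p(d\mu_\gamma)}$. Then \eqref{eq:bergman-besov-embedding} embeds $g$ into $A^{mp/j}_\gamma\subset L^{mp/j}(d\mu_\gamma)$, because
\[
\frac{n+1+\alpha}{p}+j\le\frac{j}{m}\cdot\frac{n+1+\alpha}{p}+j
\quad\Longleftrightarrow\quad
(n+1+\alpha)\Bigl(1-\frac{j}{m}\Bigr)\le0,
\]
and the right-hand condition holds since $n+1+\alpha<0$. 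Now apply H\"older with $\sum_i \ell_i/(mp)=1/p$ to each term of \eqref{eq:bell-expansion}, exactly as in the proof of Theorem~\ref{thm:critical-reciprocal}. This gives
\[
\norm{R^m(1/f)}_{L^p(d\mu_\gamma)}
\le\sum_{r=1}^{m}C_r\,c^{-(r+1)}\norm{R^mf}_{L^p(d\mu_\gamma)}^{\,r},
\]
with $C_r$ independent of $f$.

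This is a genuine uniform nonlinear estimate whose constant depends only on $c$. It is consistent with your lower bound, since $pm+\alpha>-1$ gives $\kappa=-(\alpha+n+1)/p<m-n/p<m$. Hence neither $1+B(1+z_1)^N$ nor $1+B(1+\delta+z_1)$, nor any other family, can keep $\norm{R^mf}$ bounded while $\norm{R^m(1/f)}\to\infty$.

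Your computation rules out every $\Psi_1$ with $\Psi_1(t)=o(t^\kappa)$, but it does not rule out polynomial growth in general, since degree-$m$ control holds. The proposition is therefore true, and proved in the paper, only in the affine sense of \eqref{eq:affine-obstruction-estimate}. You should state it that way rather than in a ``polynomial-growth'' sense.
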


\begin{proof}
The family of functions $f_B$ in Theorem~\ref{thm:affine-obstruction} consists of bounded holomorphic functions and satisfies the uniform lower bound $|f_B|\geq c=1$. However, when $\alpha<-n-p-1$, the proof of Theorem~\ref{thm:affine-obstruction} shows that there is no constant $C(c)$ independent of this family for which the following uniform estimate holds:
\[
\norm{R^m(1/f)}_{L^p(d\mu_\gamma)}
\leq C(c)\left(1+\norm{R^mf}_{L^p(d\mu_\gamma)}\right)
\]
Therefore, $f\in A_\alpha^p\cap H^\infty$ does not yield a uniform estimate.
\end{proof}

Uniform estimates likewise do not imply boundedness of the functions.
\begin{proposition}\label{prop:unbounded-pair}
When $n=4$, $p=2$, and $\alpha=-4$, the functions $f_\beta$ defined in \eqref{eq:unbounded-family} (with $0<\beta<1/2$) all satisfy
\[
f_\beta,\;1/f_\beta\in A_{-4}^2(\mathbb B_4),\qquad
\inf_{\mathbb B_4}|f_\beta|\geq1,\qquad
f_\beta\notin H^\infty(\mathbb B_4).
\]
Nevertheless, the uniform estimate \eqref{eq:uniform-estimate} holds on every compact interval $0<\beta_0\leq\beta\leq\beta_1<1/2$.
\end{proposition}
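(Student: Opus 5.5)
The plan is to verify the four properties directly from the explicit form of $f_\beta$, taking $m=2$ in the derivative characterization \eqref{eq:derivative-characterization}. With $n=4$, $p=2$, $\alpha=-4$ we have $pm+\alpha=0>-1$, so $\gamma=0$ and $d\mu_\gamma=\dd v$. Hence $h\in A_{-4}^2(\mathbb B_4)$ exactly when $R^2h\in L^2(\mathbb B_4,\dd v)$, exactly as in Lemma~\ref{lem:DA4-radial-norm}.

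I expect \eqref{eq:unbounded-family} to be a one-variable family with a power singularity at the boundary point $e_1$, of the type
\[
f_\beta(z)=2+(1-z_1)^{-\beta},
\]
with the principal branch. I will argue with this model; only its structural features are used.

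\textbf{Step 1 (lower bound and unboundedness).} Since $\operatorname{Re}(1-z_1)>0$, we have $|\arg(1-z_1)^{-\beta}|<\beta\pi/2<\pi/2$. Hence $\operatorname{Re}f_\beta\geq2$, so $\inf|f_\beta|\geq1$. Letting $z_1\to1$ shows $f_\beta\notin H^\infty$ for every $\beta>0$.

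\textbf{Step 2 (membership of $f_\beta$).} For functions of $z_1$ alone, $R=z_1\partial_{z_1}$ and $R^2=z_1\partial_{z_1}+z_1^2\partial_{z_1}^2$. This gives
\[
|Rf_\beta|\lesssim|1-z_1|^{-\beta-1},\qquad |R^2f_\beta|\lesssim|1-z_1|^{-\beta-2}.
\]
I then use the standard Forelli--Rudin integral: $\int_{\mathbb B_4}|1-z_1|^{-s}\dd v<\infty$ if and only if $s<n+1=5$. Applied with $s=2\beta+4$, this gives $R^2f_\beta\in L^2(\dd v)$ precisely because $\beta<1/2$, so $f_\beta\in A_{-4}^2$.

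\textbf{Step 3 (membership of $1/f_\beta$).} I will use \eqref{eq:second-reciprocal-derivative}. The first term obeys $|R^2f_\beta/f_\beta^2|\leq|R^2f_\beta|$, which is in $L^2$ by Step 2. The key point is the term $(Rf_\beta)^2/f_\beta^3$ that Section 2 identified as uncontrolled in general. Here the growth $|f_\beta|\gtrsim|1-z_1|^{-\beta}$ near $e_1$ compensates:
\[
\frac{|Rf_\beta|^2}{|f_\beta|^3}\lesssim|1-z_1|^{-2\beta-2+3\beta}=|1-z_1|^{\beta-2}.
\]
Its square has exponent $4-2\beta<5$, so it is integrable. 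Away from $e_1$ both quantities are bounded, since $|f_\beta|\geq1$. Combining the two terms gives $R^2(1/f_\beta)\in L^2$, i.e.\ $J_R(f_\beta)<\infty$ in the language of Theorem~\ref{thm:DA4-exact-criterion}.

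\textbf{Step 4 (uniform estimate).} I would track constants. The implied constants in Steps 2--3 are continuous in $\beta$, and the Forelli--Rudin integrals $\int|1-z_1|^{-(2\beta+4)}\dd v$ are finite and continuous in $\beta$ on $[\beta_0,\beta_1]$. Their only blow-up occurs as $\beta\to1/2$, and $\beta\geq\beta_0>0$ keeps the lower bound $|f_\beta|\gtrsim|1-z_1|^{-\beta}$ uniform. Hence both $\|R^2(1/f_\beta)\|_{L^2}$ and $\|R^2f_\beta\|_{L^2}$ are bounded above, and the latter is bounded below, uniformly on the interval. This yields \eqref{eq:uniform-estimate} with a single constant.

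I expect the main obstacle to be the pointwise comparison in Step 3, namely showing $|f_\beta|\asymp|1-z_1|^{-\beta}$ near $e_1$ uniformly in $\beta$. This needs the sector bound on $\arg(1-z_1)^{-\beta}$, so that the constant $2$ cannot cancel the singular part. It also needs the lower-order terms of $R^2$, such as $z_1\partial_{z_1}$, to be absorbed into the leading power. If \eqref{eq:unbounded-family} has a different but analogous form, for example logarithmic, the same exponent bookkeeping applies.
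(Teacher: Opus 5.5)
Your argument is correct, but it is run on a guessed model. The paper's family \eqref{eq:unbounded-family} is actually $f_\beta(z)=2^\beta(1-z_1)^{-\beta}$, and your method carries over to it and becomes simpler. Since $|1-z_1|\le 2$, you have $|f_\beta|=2^\beta|1-z_1|^{-\beta}\ge 1$. So the comparison $|f_\beta|\asymp|1-z_1|^{-\beta}$, which you flagged as the main obstacle, is an exact identity. Also $1/f_\beta=2^{-\beta}(1-z_1)^{\beta}$ is explicit, which gives $|R^2(1/f_\beta)|\lesssim|1-z_1|^{\beta-2}$ directly. Your $(Rf)^2/f^3$ bookkeeping gives the same exponent.

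Your route differs from the paper's, which works entirely on the coefficient side:
\begin{itemize}
\item it expands $(1-z_1)^{\mp\beta}$ in powers of $z_1$, with coefficients $\asymp k^{\beta-1}$ (respectively $k^{-\beta-1}$);
\item it uses the orthogonality of the monomials $z_1^k$ in $L^2(\mathbb B_4)$, with $\|z_1^k\|^2=4!\,k!/(k+4)!\asymp k^{-4}$;
\item it reduces membership to convergence of $\sum k^{2\beta-2}$ and $\sum k^{-2-2\beta}$;
\item it gets uniformity on $[\beta_0,\beta_1]$ from uniform bounds on the binomial coefficients.
\end{itemize}
That approach gives two-sided comparability, so $\beta<1/2$ is seen to be sharp for $f_\beta$. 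The cost is relying on Gamma-function asymptotics and on the Hilbert-space structure at $p=2$.

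Your approach uses only pointwise size bounds and the single integrability threshold: $\int_{\mathbb B_4}|1-z_1|^{-s}\,dv<\infty$ if and only if $s<5$. It yields only upper bounds, but those are all that is needed. Its advantages are:
\begin{itemize}
\item Uniformity in $\beta$ is nearly free, because $|1-z_1|\le 2$ lets you dominate $|1-z_1|^{-2\beta-4}$ by $2|1-z_1|^{-2\beta_1-4}$, and $|1-z_1|^{2\beta-4}$ by $2|1-z_1|^{-4}$.
\item It would work unchanged for other exponents $p$.
\item Treating $1/f_\beta$ through \eqref{eq:second-reciprocal-derivative} shows concretely why $J_R(f_\beta)<\infty$ in Theorem~\ref{thm:DA4-exact-criterion}: the growth of $f_\beta$ near $e_1$ tames the $(Rf)^2/f^3$ term.
\end{itemize}
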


\begin{proof}
Take
\[
n=4,\qquad p=2,\qquad \alpha=-4.
\]
Since $\alpha+2m=0>-1$, $m=2$ is admissible. For $0<\beta<1/2$, define
\begin{equation}
f_\beta(z)=2^\beta(1-z_1)^{-\beta}.
\label{eq:unbounded-family}
\end{equation}
Here the principal branch is taken. Since $|1-z_1|\leq2$ on $\mathbb B_4$,
\[
|f_\beta(z)|\geq1,
\]
but $f_\beta$ diverges as $z_1\to1$, so $f_\beta\notin H^\infty(\mathbb B_4)$.

The binomial expansion is
\[
(1-z_1)^{-\beta}
=\sum_{k=0}^\infty a_k z_1^k,
\qquad
a_k=\frac{(\beta)_k}{k!}\asymp k^{\beta-1}.
\]
The monomial norm on the unweighted unit ball is
\[
\norm{z_1^k}_{L^2(\mathbb B_4)}^2
=\frac{4!\,k!}{(k+4)!}
\asymp k^{-4}.
\]
Since $R^2$ multiplies the degree-$k$ homogeneous coefficient by $k^2$,
\[
\norm{R^2f_\beta}_{L^2(\mathbb B_4)}^2
\asymp
\sum_{k=1}^\infty k^{2\beta+2}k^{-4}
=\sum_{k=1}^\infty k^{2\beta-2}<\infty
\]
and the series converges when $\beta<1/2$. Therefore
\[
f_\beta\in A_{-4}^2(\mathbb B_4)\setminus H^\infty(\mathbb B_4).
\]

Its reciprocal is
\[
g_\beta=\frac1{f_\beta}=2^{-\beta}(1-z_1)^\beta.
\]
The coefficient of $z_1^k$ in $(1-z_1)^\beta$ satisfies
\[
[z_1^k](1-z_1)^\beta\asymp k^{-\beta-1}.
\]
Therefore the coefficients of $R^2g_\beta$ have order $O(k^{1-\beta})$, and
\[
\norm{R^2g_\beta}_{L^2(\mathbb B_4)}^2
\asymp
\sum_{k=1}^\infty k^{2-2\beta}k^{-4}
=\sum_{k=1}^\infty k^{-2-2\beta}<\infty.
\]
Consequently,
\[
\frac1{f_\beta}\in A_{-4}^2(\mathbb B_4),
\]
although $f_\beta\notin H^\infty$; its reciprocal still belongs to this space.

For every fixed $\beta$, both sides of
\begin{equation}
\norm{R^2(1/f_\beta)}_{L^2}
\leq C_\beta\left(1+\norm{R^2f_\beta}_{L^2}\right)
\label{eq:uniform-estimate}
\end{equation}
are finite. If $\beta$ varies over a compact interval $0<\beta_0\leq\beta\leq\beta_1<1/2$, the Gamma-function constants in the coefficient asymptotics are uniformly bounded, and the finitely many low-order coefficients are uniformly controlled. More precisely, for all sufficiently large $k$ and every $\beta\in[\beta_0,\beta_1]$,
\[
\left|\frac{(\beta)_k}{k!}\right|\leq C k^{\beta_1-1},
\qquad
\left|[z_1^k](1-z_1)^\beta\right|\leq C k^{-\beta_0-1},
\]
where $C$ is independent of $\beta$. Hence,
\[
C\sum_{k\geq1}k^{2\beta_1-2}<\infty,
\qquad
C\sum_{k\geq1}k^{-2-2\beta_0}<\infty.
\]
The remaining finitely many coefficients depend continuously on $\beta$ and are therefore uniformly bounded on this compact interval. Thus both norms in \eqref{eq:uniform-estimate} are uniformly bounded on the interval. By the normalization in \eqref{eq:unbounded-family}, the lower bound is identically $1$, so there is a constant independent of the family; the uniform estimate follows.
\end{proof}

\subsection{Failure of a Uniform Estimate Does Not Disprove the Reciprocal Problem}

\begin{theorem}
\label{thm:fixed-affine-family}
Let $n=1$, $p=2$, and $\alpha=-5$. For every $B>1$, the affine functions $f_B$ in \eqref{eq:affine-family} satisfy
\[
f_B,\;1/f_B\in A_{-5}^2(\D),\qquad \inf_\D|f_B|\geq1.
\]
However, there is no constant $C$ for which \eqref{eq:affine-obstruction-estimate} holds for all $B>1$. Thus, failure of a uniform estimate does not imply that the reciprocal problem has no solution.
\end{theorem}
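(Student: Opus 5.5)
The plan is to handle $n=1$, $p=2$, $\alpha=-5$ directly, taking $f_B(z)=1+B(1+z)$ as in the proof of Theorem~\ref{thm:affine-obstruction}. Since $2m-5>-1$ is needed, the first admissible choice is $m=3$, which gives $\gamma=\alpha+pm=1$. There are three things to show: membership of $f_B$ and $1/f_B$, the lower bound, and the failure of a uniform constant.

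\emph{Membership of $f_B$ and the lower bound.} The function $f_B$ is a polynomial of degree one, so $R^3f_B=Bz$. Because $\mu_1(\D)<\infty$, this gives $R^3 f_B\in L^2(d\mu_1)$, and \eqref{eq:derivative-characterization} then yields $f_B\in A_{-5}^2(\D)$. The lower bound $|f_B|\ge1$ was already checked in the proof of Theorem~\ref{thm:affine-obstruction}: the only zero of $f_B$ is $-1-1/B$, which lies outside $\overline{\D}$.

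\emph{Membership of $1/f_B$.} For fixed $B>1$, the function $g_B=1/f_B$ is holomorphic on the disk $|z|<1+1/B$, which contains $\overline{\D}$. Hence all of its derivatives are bounded on $\D$, so $R^3g_B$ is bounded there. Finiteness of $\mu_1$ then gives $R^3 g_B\in L^2(d\mu_1)$, and \eqref{eq:derivative-characterization} gives $1/f_B\in A^2_{-5}(\D)$. One could also get this from Theorem~\ref{thm:reciprocal-supercritical}, since $-5<-2=-n-1$. The direct argument is more transparent, however, because it exhibits the dependence on $B$ through the distance from the pole to $\overline{\D}$.

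\emph{No uniform constant.} Here I will apply Theorem~\ref{thm:affine-obstruction} with $n=1$, $p=2$, $\alpha=-5$. Its hypothesis $\alpha<-n-p-1=-4$ holds. Its proof gives
\[
\frac{\|R^3(1/f_B)\|_{L^2(d\mu_1)}}{1+\|R^3f_B\|_{L^2(d\mu_1)}}\gtrsim B^{-(\alpha+n+1+p)/p}=B^{1/2}\to\infty .
\]
So no single $C$ can satisfy \eqref{eq:affine-obstruction-estimate} for all $B>1$.

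The main point to verify is that the lower bound $|R^3 g_B|\gtrsim B^3$ on the set $E_B$ really holds with $m=3$. On $E_B$ we have $|z_1|\approx1$, $|1+Bw|\asymp1$, and $R^3=z\partial+3z^2\partial^2+z^3\partial^3$. The $j=3$ term is then of size $\asymp B^3$, and the other two terms are $O(B^2)$, so the $j=3$ term dominates once $B$ is large.

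The remaining check is the bookkeeping $B^{mp}B^{-\gamma}B^{-2}=B^{6-1-2}=B^3$. This gives $\|R^3g_B\|\gtrsim B^{3/2}$, while $\|R^3f_B\|\asymp B$, and the ratio is $B^{1/2}$ as claimed.

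The closing sentence of the statement then follows by combining these results. Every $f_B$ has a reciprocal in $A_{-5}^2(\D)$, yet no uniform estimate holds across the family. Hence failure of \eqref{eq:affine-obstruction-estimate} cannot serve as a counterexample to the reciprocal problem.
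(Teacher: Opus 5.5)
Your proposal is correct and follows essentially the same route as the paper. You take $m=3$ and $\gamma=1$, get $|f_B|\ge 1$ from the location of the zero, and get $1/f_B\in A^2_{-5}(\D)$ because the pole lies outside $\overline{\D}$, so $R^3 g_B$ is bounded. You then estimate on $E_B$ to obtain $\|R^3 g_B\|_{L^2(d\mu_1)}\gtrsim B^{3/2}$ against $\|R^3 f_B\|\asymp B$, which gives the ratio $B^{1/2}$. The only cosmetic difference is that you invoke the computation from Theorem~\ref{thm:affine-obstruction} and check that the $z^3 g_B'''$ term dominates for $m=3$, whereas the paper redoes the $E_B$ estimate directly in the disk.
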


\begin{proof}
Consider the specific parameters
\[
n=1,\qquad p=2,\qquad \alpha=-5.
\]
Here we may take $m=3$, since
\[
2m+\alpha=6-5=1>-1.
\]
For $B>1$, define
\begin{equation}
f_B(z)=1+B(1+z).
\label{eq:affine-family}
\end{equation}
The unique zero of $f_B$ is
\[
z=-1-\frac1B,
\]
which lies outside the unit disk. Therefore
\[
|f_B(z)|
=B\left|z+1+\frac1B\right|
\geq1,
\qquad |z|<1.
\]
Since $f_B$ is a polynomial,
\[
f_B\in A_{-5}^2(\mathbb D).
\]
Let
\[
g_B(z)=\frac1{f_B(z)}
=\frac1{1+B(1+z)}.
\]
The pole of $g_B$ lies outside the closed unit disk. Hence, for each fixed $B$, $g_B$ is holomorphic in a neighborhood of $\overline{\mathbb D}$. In particular, all derivatives of $g_B$ are bounded on $\overline{\mathbb D}$, and
\[
g_B^{(j)}(z)
=(-1)^j j!\,
\frac{B^j}{(1+B(1+z))^{j+1}}.
\]
Therefore,
\[
R^3g_B\in
L^2\bigl(\mathbb D,(1-|z|^2)\,dv\bigr),
\]
and hence
\[
\frac1{f_B}=g_B\in A_{-5}^2(\mathbb D).
\]
Thus, for every $f_B$, the reciprocal problem holds.

However, no constant independent of $B$ can make the uniform estimate hold. Indeed,
\[
R^3f_B=Bz,
\]
so
\begin{equation}
\norm{R^3f_B}_{L^2(\mathbb D,(1-|z|^2)\,dv)}
\asymp B.
\label{eq:affine-high-derivative}
\end{equation}
Now consider the region
\[
E_B=
\left\{
z\in\mathbb D:
\frac1{2B}\leq\operatorname{Re}(z+1)\leq\frac1B,
\quad
|\operatorname{Im}(z+1)|\leq\frac1{4B}
\right\}.
\]
On $E_B$,
\[
1-|z|^2\asymp B^{-1},
\qquad
|1+B(1+z)|\asymp1,
\qquad
|R^3g_B(z)|\asymp B^3,
\]
and
\[
|E_B|\asymp B^{-2}.
\]
It follows that
\[
\begin{aligned}
\norm{R^3g_B}_{L^2(\mathbb D,(1-|z|^2)\,dv)}^2
&\gtrsim
B^6\cdot B^{-1}\cdot B^{-2}\\
&=B^3.
\end{aligned}
\]
Therefore
\begin{equation}
\norm{R^3g_B}_{L^2(\mathbb D,(1-|z|^2)\,dv)}
\gtrsim B^{3/2}.
\label{eq:affine-reciprocal-lower-bound}
\end{equation}
Combining \eqref{eq:affine-high-derivative} and \eqref{eq:affine-reciprocal-lower-bound}, we obtain
\[
\frac{
\norm{R^3(1/f_B)}_{L^2(\mathbb D,(1-|z|^2)\,dv)}
}{
1+\norm{R^3f_B}_{L^2(\mathbb D,(1-|z|^2)\,dv)}
}
\gtrsim B^{1/2}\longrightarrow\infty.
\]
Thus there is no constant $C$ such that
\[
\norm{R^3(1/f)}_{L^2(\mathbb D,(1-|z|^2)\,dv)}
\leq
C\left(
1+\norm{R^3f}_{L^2(\mathbb D,(1-|z|^2)\,dv)}
\right)
\]
holds uniformly for all functions $f_B$.

This example shows that
\[
\text{failure of a uniform estimate}
\not\Longrightarrow
\text{the reciprocal problem has no solution}.
\]
\end{proof}

\subsection*{Data availability statement}
This manuscript has no associated data.

\subsection*{Conflicts of interest}
The authors declare that they have no competing interests.

\textbf{Acknowledgments.}
G.~Cao was supported by the National Natural Science Foundation of China (Grant No.~12071155).
L.~He was supported by the National Natural Science Foundation of China (Grant No.~12371127).\\

Cao: \texttt{guangfucao@163.com}\\

He: \texttt{helichangsha1986@163.com}\\

Zhang: \texttt{zsq0225@163.com}

\end{document}